\documentclass[11pt]{amsart}
\usepackage[active]{srcltx}
\usepackage[utf8]{inputenc}
\usepackage{amsmath,mathtools}
\usepackage{stmaryrd}
\usepackage{MnSymbol}
\usepackage{hyperref}
\usepackage{tikz}
\usetikzlibrary{arrows,automata}
\usepackage{enumitem}
\usepackage{mathrsfs}
\usepackage{pst-plot}
\usepackage{auto-pst-pdf}
\usepackage[all]{xy}
\usepackage{stackrel}
\usepackage{listings}
\usepackage{rotating}
\usepackage{graphicx}
\lstdefinelanguage{GAP}{%
  morekeywords={%
    Assert,Info,IsBound,QUIT,%
    TryNextMethod,Unbind,and,break,%
    continue,do,elif,%
    else,end,false,fi,for,%
    function,if,in,local,%
    mod,not,od,or,%
    quit,rec,repeat,return,%
    then,true,until,while%
  },%
  sensitive,%
  morecomment=[l]\#,%
  morestring=[b]",%
  morestring=[b]',%
}[keywords,comments,strings]
\usepackage[T1]{fontenc}
\usepackage{xcolor}
\usepackage{enumitem}
\lstset{
  basicstyle=\small\ttfamily,
  keywordstyle=\color{red},
  stringstyle=\color{blue},
  commentstyle=\color{green!70!black},
  columns=fullflexible,
}
\newtheorem{thm}{Theorem}[section]

\newtheorem{deff}[thm]{Definition}

\newtheorem{lem}[thm]{Lemma}
\newtheorem{prop}[thm]{Proposition}
\newtheorem{example}[thm]{Example}

\newtheorem{con}[thm]{Conjecture}

\newcommand{\abs}[1]{\left\vert#1\right\vert}

\newcommand{\cc}[1]{\textcolor{red}{{#1}}}

\newcommand{\qq}[1]{\textcolor{magenta}{{#1}}}

\newcommand{\vv}[1]{\textcolor{cyan}{{#1}}}
\def\pv#1{\ensuremath{\mathsf{#1}}}
\newcommand{\R}{\mathrel{\mathscr R}} 
\newcommand{\eL}{\mathrel{\mathscr L}} 
\newcommand{\HH}{\mathrel{\mathscr H}}

\newcommand{\DDet}{\mathop{\mathrm{Det}}\nolimits}

\usepackage{caption}

\makeatletter
\newcommand{\mylabel}[2]{#2\def\@currentlabel{#2}\label{#1}}
\makeatother

\begin{document}
\title[The determinant of \(\lll\)-smooth semigroups]{The determinant of \(\lll\)-smooth semigroups}
\author{M.H. Shahzamanian}
\address{M.H. Shahzamanian\\ CMUP, Departamento de Matemática, Faculdade de Ciências, Universidade do Porto, Rua do Campo Alegre s/n, 4169--007 Porto (Portugal).}
\email{m.h.shahzamanian@gmail.com}
\thanks{Mathematics Subject Classification 2020: 20M25, 16L60, 16S36.\\
Keywords and phrases: Frobenius algebra; semigroup determinant; paratrophic determinant; semigroup algebra.}

\begin{abstract}
This paper continues the investigation of non-zero determinants associated with finite semigroups containing a pair of non-commuting idempotents, as initiated in~\cite{Sha-Det2}. We focus on a class of semigroups, called \( \lll \)-smooth semigroups, that allow meaningful structural analysis despite the absence of \( \ll \)-transitivity. Within this framework, we develop a method for computing contracted semigroup determinants, building on and extending the approach carried over from~\cite{Sha-Det2}.
These computations are motivated by applications in coding theory, particularly by the potential extending the MacWilliams theorem for codes over semigroup algebras.
\end{abstract}
\maketitle


\section{Introduction}

In the 1880s, Dedekind introduced the concept of the group determinant for finite groups, which was later studied in depth with Frobenius. 
At the same time, Smith approached the concept from a different perspective, focusing on the determinant of a \(G \times G\) matrix where the entry at position \((g, h)\) is \(x_{gh}\), with \(G\) being a finite group and \(x_k\) as variables for each \(k \in G\) \cite{Smith}. 
This concept has since been extended to finite semigroups, with various researchers contributing to its development \cite{Lindstr, Wilf, Wood}. 
A notable application of the semigroup determinant arises in coding theory. The extension of the MacWilliams theorem from codes over finite fields to chain rings relies on the non-zero determinant property of certain semigroups. Specifically, the theory of linear codes over finite Frobenius rings demonstrates this extension property, where the semigroup determinant plays a key role \cite{Wood-Duality}.  


In \cite{Ste-Fac-det}, Steinberg examined the factorization of the semigroup determinant for commutative semigroups, proving that it is either zero or factors into linear polynomials.  
Building on this, \cite{Sha-Det} investigates the determinant of semigroups in the pseudovariety \(\pv{ECom}\), which consists of semigroups whose idempotents commute.  
As shown by Ash, \(\pv{ECom}\) is generated by finite inverse semigroups—a broader class than the semigroups with central idempotents considered in \cite{Ste-Fac-det}.  

In \cite{Sha-Det2}, the study was extended to semigroups beyond \(\pv{ECom}\), focusing on finite semigroups possessing pairs of non-commutative idempotents.  
The focus was on a class of semigroups that, while not in \(\pv{ECom}\), satisfy certain structural properties such as \(\ll\)-smoothness, \(\ll\)-transitivity, and being a singleton-rich monoid.  
In \cite{Sha-Det3}, a class of monoids referred to as Layered Catalan Monoids (\(LC_n\)), satisfying the conditions for \(\ll\)-smoothness, was introduced.  
These monoids are characterized by specific algebraic identities, including those defining Catalan monoids, with some exceptions.  
Furthermore, in \cite{Sha-Det3}, the determinant of \(LC_n\) was computed, proving that it is nonzero for \(1 \leq n \leq 7\) but vanishes for \(n \geq 8\).

The key to computing the determinant of the semigroups studied in \cite{Ste-Fac-det, Sha-Det, Sha-Det2} lies in equipping the semigroup \( S \) with a partially ordered set structure, along with a map \( Z \) defined based on this order.  
The map \( Z \) sends each element of \( S \) to the sum of all elements in \( S \) that are less than it with respect to the partial order.  
A crucial aspect of this method is that \( Z \) induces an isomorphism of \( \mathbb{C} \)-algebras, thereby establishing a connection between the determinants of the original semigroup algebra and its image under \( Z \).  
Since the determinant of the image algebra is typically much easier to compute, this isomorphism enables the calculation of the determinant of the original semigroup algebra via its image.

For the semigroups considered in~\cite{Ste-Fac-det, Sha-Det, Sha-Det2}, such a connection has been firmly established.  
In particular,~\cite{Sha-Det2} demonstrates the existence of this connection for \(\ll\)-transitive semigroups.  
However, for semigroups that are not \(\ll\)-transitive, this question remains open.  
In this paper, we explore a broader class of semigroups in which the connection still holds, despite the absence of \(\ll\)-transitivity.  
We refer to these as \emph{pseudo \(\ll\)-transitive semigroups}.  
Within this broader class, we focus on a subclass that we term \emph{\(\lll\)-smooth semigroups}, for which we compute the determinant.  
This subclass provides a small but significant family of non-\(\ll\)-transitive semigroups whose determinants are now better understood.

In \cite{Sha-Det}, it is shown that for a semigroup \( S \in \pv{ECom} \), the isomorphic image \( (S, \ast) \) of \( (S, \cdot) \) under the map \( Z \) does not contain elements \( s \) and \( t \) such that \( s^{\ast} \neq t^{+} \) and \( s \boldsymbol{*} t \neq 0 \).  
Although \cite{Sha-Det2} establishes that the mapping \( Z \) is an isomorphism of \( \mathbb{C} \)-algebras for \(\ll\)-transitive semigroups, it does not explore how this algebraic property influences or fails to prevent the existence of such elements in semigroups outside the pseudovariety \( \pv{ECom} \).
In this paper, we address this question for a broader class of semigroups beyond the \(\ll\)-transitive case specifically, those that are \emph{singleton-rich}.  
We demonstrate that for such semigroups, there do exist elements \( s \) and \( t \) such that \( s^{\ast} \neq t^{+} \) and \( s \boldsymbol{*} t \neq 0 \).  
This result reveals a fundamental distinction between semigroups within and beyond \( \pv{ECom} \), particularly in how their algebraic structure interacts with combinatorial properties.


\section{Preliminaries}

\subsection{Semigroups}

For standard notation and terminology regarding semigroups, we refer the reader to~\cite[Chap.~5]{Alm}, \cite[Chaps.~1--3]{Cli-Pre}, and~\cite[Appendix~A]{Rho-Ste}.

Let \( S \) be a finite semigroup, and let \( a, b \in S \). Green’s relations, originally introduced by Green~\cite{Gre}, are defined as follows:
\begin{itemize}
    \item \( a \mathrel{\mathcal{R}} b \) if and only if \( aS^1 = bS^1 \),
    \item \( a \mathrel{\mathcal{L}} b \) if and only if \( S^1a = S^1b \),
    \item \( a \mathrel{\mathcal{H}} b \) if and only if \( a \mathrel{\mathcal{R}} b \) and \( a \mathrel{\mathcal{L}} b \),
    \item \( a \mathrel{\mathcal{J}} b \) if and only if \( S^1aS^1 = S^1bS^1 \).
\end{itemize}
If \( S \) contains an identity element, then we set \( S^1 = S \). Otherwise, we define \( S^1 = S \cup \{1\} \), where \( 1 \) acts as an identity. 
The \( \mathcal{R}, \mathcal{L}, \mathcal{H}, \mathcal{J} \)-classes of an element \( a \in S \) are denoted by \( R_a, L_a, H_a \), and \( J_a \), respectively.

We also consider a refinement of the \( \mathcal{L} \)-relation, introduced by Fountain et al.~\cite{Fou-Gom-Gou}. We write \( a \mathrel{\widetilde{\mathcal{L}}} b \) if and only if \( a \) and \( b \) share the same set of idempotent right identities, that is,
\[
ae = a \quad \text{if and only if} \quad be = b.
\]
The dual relation \( \widetilde{\mathcal{R}} \) is defined analogously, and we set \( \widetilde{\mathcal{H}} = \widetilde{\mathcal{L}} \cap \widetilde{\mathcal{R}} \). The corresponding equivalence classes of an element \( s \in S \) are denoted by \( \widetilde{L}_s, \widetilde{R}_s \), and \( \widetilde{H}_s \), respectively. For additional background, see~\cite{Lawson-Sem-Cat}.

An element \( e \in S \) is called \emph{idempotent} if \( e^2 = e \). The set of all idempotents in \( S \) is denoted by \( E(S) \). 

For any element \( s \in S \), we define \( s^{\omega} \) as the eventual value of the sequence \( (s^{n!})_n \), which exists since \( S \) is finite.


\subsection{Identities}

For standard notation and terminology related to  identities, we refer the reader to~\cite{MR912694}.

Let \( X \) be a countably infinite set, called an \emph{alphabet}, and its elements are referred to as \emph{letters}.  
The set \( X^+ \) consists of all finite, non-empty words \( x_1 \cdots x_n \) with \( x_1, \ldots, x_n \in X \), and forms a semigroup under concatenation, known as the \emph{free semigroup} over \( X \).  
The corresponding \emph{free monoid} \( X^* = (X^+)^1 \) includes the empty word, which serves as the identity element.

Let \( t = x_1 \cdots x_n \in X^+ \), with each \( x_i \in X \).  
The set \( \{x_1, \ldots, x_n\} \) is called the \emph{content} of \( t \), denoted by \( c(t) \), and the number \( n \) is the \emph{length} of \( t \), denoted \( \abs{t} \).  
A letter \( x \in X \) is said to \emph{occur} in \( t \) if \( x \in c(t) \).  
We say that a word \( s \in X^+ \) \emph{occurs in} \( t \) if \( t = t_1 s t_2 \) for some \( t_1, t_2 \in X^* \).

Let \( u = u_1 \cdots u_m \in X^* \), with each \( u_i \in X \).  
We say that \( u \) is a \emph{subword} of \( t \in X^* \) if there exist words \( u_0', u_1', \ldots, u_m' \in X^* \) such that
\[
t = u_0' u_1 u_1' u_2 \cdots u_{m-1}' u_m u_m'.
\]
Given a subset \( Y \subseteq X \), we denote by \( t_Y \) the longest subword of \( t \) such that \( c(t_Y) \subseteq Y \).

An \emph{identity} is an equation of the form \( t_1 = t_2 \), where \( t_1, t_2 \in X^* \).  
A monoid \( M \) is said to \emph{satisfy} the identity \( t_1 = t_2 \), or that the identity \emph{holds in} \( M \), if for every homomorphism \( \phi \colon X^* \to M \), we have \( \phi(t_1) = \phi(t_2) \).


\subsection{Determinant of a semigroup}
For standard notation and terminology relating to finite dimensional algebras, the reader is referred to \cite{Assem-Ibrahim, Benson}.

A based algebra is a finite-dimensional complex algebra \( A \) equipped with a distinguished basis \( B \). We often refer to the pair \( (A, B) \) to emphasize both the algebra and its basis.  
The multiplication in \( A \) is determined by the structure constants with respect to \( B \), given by the equations  
\[
bb' = \sum\limits_{b''\in B} c_{b'',b,b'} b'',
\]
where \( b, b' \in B \) and \( c_{b'',b,b'} \in \mathbb{C} \).  

Let \( X_B = \{ x_b \mid b \in B \} \) be a set of variables in bijection with \( B \). These structure constants can be represented in a matrix called the Cayley table, which is a \( B \times B \) matrix with entries in the polynomial ring \( \mathbb{C}[X_B] \), defined by  
\[
C(A,B)_{b,b'} = \sum\limits_{b''\in B} c_{b'',b,b'} x_{b''}.
\]  
The determinant of this matrix, denoted by $\theta_{(A,B)}(X_B)$, is either identically zero or a homogeneous polynomial of degree $\abs{B}$.


Let \( S \) be a finite semigroup. The semigroup \( \mathbb{C} \)-algebra \( \mathbb{C}S \) consists of all formal sums  
\[
\sum\limits_{s\in S} \lambda_s s,
\]
where \( \lambda_s \in \mathbb{C} \) and \( s \in S \), with multiplication defined by the formula  
\[
\left( \sum\limits_{s\in S} \lambda_s s \right) \cdot \left( \sum\limits_{t\in S} \mu_t t \right) = \sum\limits_{u=st\in S} \lambda_s \mu_t u.
\]
Note that \( \mathbb{C}S \) is a finite-dimensional \( \mathbb{C} \)-algebra with basis \( S \).  

If we set \( A = \mathbb{C}S \) and \( B = S \), then the Cayley table \( C(S) = C(\mathbb{C}S, S) \) is the \( S \times S \) matrix over \( \mathbb{C}[X_S] \) with  
\[
C(S)_{s,s'} = x_{ss'},
\]
where \( X_S = \{ x_s \mid s \in S \} \) is a set of variables in bijection with \( S \).  
We denote the determinant \( \DDet C(\mathbb{C}S, S) \) by \( \theta_S(X_S) \) and refer to it as the (Dedekind-Frobenius) \emph{semigroup determinant} of \( S \).  
If the semigroup \( S \) is fixed, we often write \( X \) instead of \( X_S \).  

For more details on this topic, we refer the reader to \cite{Frobenius1903theorie}, \cite[Chapter~16]{Okn}, and \cite{Ste-Fac-det}.  


The \emph{contracted semigroup algebra} of a semigroup \( S \) with a zero element \( 0 \) over the complex numbers is defined as  
\[
\mathbb{C}_0S = \mathbb{C}S / \mathbb{C}0;
\]
note that \( \mathbb{C}0 \) is a one-dimensional two-sided ideal.  
This algebra can be thought of as having a basis consisting of the non-zero elements of \( S \) and having multiplication that extends that of \( S \), but with the zero of the semigroup being identified with the zero of the algebra.  

The \emph{contracted semigroup determinant} of \( S \), denoted by \( \widetilde{\theta}_S \), is the determinant of  
\[
\widetilde{C}(S) = C(\mathbb{C}_0S, S\setminus\{0\}),
\]
where  
\[
\widetilde{C}(S)_{s,t} =
\begin{cases}
    x_{st}, & \text{if } st \neq 0, \\
    0, & \text{otherwise}.
\end{cases}
\]
Let \( \widetilde{X} = X_{S \setminus \{0\}} \) if \( S \) is understood.  

According to Proposition 2.7 in~\cite{Ste-Fac-det} (the idea mentioned in~\cite{Wood}), there is a connection between the contracted semigroup determinant and the semigroup determinant of a semigroup \( S \) with a zero element.  
There is a \( \mathbb{C} \)-algebra isomorphism between the \( \mathbb{C} \)-algebra \( \mathbb{C}S \) and the product algebra \( \mathbb{C}_0S \times \mathbb{C}0 \), which sends \( s \in S \) to \( (s, 0) \).  
Put \( y_s = x_s - x_0 \) for \( s \neq 0 \) and let \( Y = \{ y_s \mid s \in S \setminus \{0\} \} \).  
Then  
\[
\theta_S(X) = x_0 \widetilde{\theta}_S(Y).
\]  
Therefore, \( \widetilde{\theta}_S(\widetilde{X}) \) can be obtained from \( \theta_S(X) / x_0 \) by replacing \( x_0 \) with 0.  


\section{Relation \(\lll\)}\label{llAndlll}

Necessary conditions for a semigroup $S$ to have a nonzero $\theta_S(X)$ are stated in \cite{Ste-Fac-det}.
If $\theta_S(X)$ is not equal to 0, then the semigroup algebra $\mathbb{C}S$ is a unital algebra, according to Theorem 2.1 in~\cite{Ste-Fac-det}.

We define the functions $\varphi^{\ast}$ and $\varphi^{+}$ from $S$ to the power set of $E(S)$ 
as follows:
\[\varphi^{\ast}(s)=\{e\in E(S)\mid se = s\}\ \text{and} \ \varphi^{+}(s)=\{e\in E(S)\mid es = s\}.\]
If $S$ is finite and $\mathbb{C}S$ is a unital algebra, then the subsets $\varphi^{\ast}(s)$ and $\varphi^{+}(s)$ are nonempty, for every $s\in S$ (\cite[Lemma 3.1]{Sha-Det}).

In~\cite{Sha-Det}, the determinant of a semigroup within the pseudovariety \pv{ECom} is explored.
In~\cite{Sha-Det2}, the investigation into the determinant of a semigroup outside the pseudovariety \pv{ECom} began. In this paper, we continue this investigation and consider additional cases.
Throughout the paper, we consider a finite semigroup 
$S$ with the assumption that the semigroup algebra $\mathbb{C}S$ is a unital algebra.
In this section, we recall some definitions and results from Sections 3 and 4 of~\cite{Sha-Det2}.

Let $s\in S$. Since $\mathbb{C}S$ is a unital algebra,
the subsets $\varphi^{\ast}(s)$ and $\varphi^{+}(s)$ are nonempty.
We denote the kernel of $\langle \varphi^{\ast}(s) \rangle$ and $\langle \varphi^{+}(s) \rangle$ by $s^{\ast\ast}$ and $s^{++}$, respectively.
We have $s^{\ast\ast} = t^{\ast\ast}$ if and only if $\varphi^{\ast}(s)=\varphi^{\ast}(t)$. 
Additionally, we have $s^{++} = t^{++}$ if and only if $\varphi^{+}(s)=\varphi^{+}(t)$. 
Then, the equivalence relations $\widetilde{\eL}$, $\widetilde{\R}$
and $\widetilde{\HH}$ can be described as follows:
\begin{enumerate}
\item $s \widetilde{\eL} t$ if $s^{\ast\ast} = t^{\ast\ast}$;
\item $s \widetilde{\R} t$ if $s^{++} = t^{++}$;
\item $s \widetilde{\HH} t$ if $s^{\ast\ast} = t^{\ast\ast}$ and $s^{++} = t^{++}$.
\end{enumerate}

It is clear that if $e$ is an idempotent then $e \in e^{\ast\ast}, e^{++}$.

Let $s$ and $t$ be elements of $S$. We define $s \ll t$ if \[s = s^{++}ts^{\ast\ast}.\]

We say that a semigroup $S$ is singleton-rich
if the cardinality of subsets $s^{\ast\ast}$ and $s^{++}$ is equal to one, for every $s$ in $S$.
In this case, we denote the single element of the subsets $s^{\ast\ast}$ and $s^{++}$ by $s^{\ast}$ and $s^{+}$, respectively.
Note that, it is easy to verify that $s^{\ast}$ and $s^{+}$ are idempotent. 
In this paper, as in \cite{Sha-Det2}, we assume that all semigroups are singleton-rich.
It is easy to verify that if there exist idempotents $e$ and $f$ such that $ef = e$ and $fe = f$ (or the dual conditions $ef = f$ and $fe = e$), then $S$ is not singleton-rich.

By Lemma~3.1 and Proposition~3.2 in~\cite{Sha-Det2}, we have the following lemma.

\begin{lem}\label{eslls}
The following statements hold:
\begin{enumerate}
\item
Let $s\in S$ and $e, f\in E(S)$.
We have $es, se, esf \ll s$.
\item Let $e,f\in E(S)$. If $ef=e$ or $fe=e$, then we have $e\leq f$.
\item there do not exist pairwise distinct elements $s_1,\ldots,s_n$ with $1 < n$ such that
\[s_1 \ll s_2 \ll \cdots \ll s_n \ll s_1.\]
\end{enumerate}
\end{lem}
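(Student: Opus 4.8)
The plan is to handle the three parts separately, using throughout the one fact that singleton-richness really buys us: since $s^{\ast\ast}=\{s^{\ast}\}$ is the kernel (minimal ideal) of the finite semigroup $\langle\varphi^{\ast}(s)\rangle$, a one-element ideal is a two-sided zero, so $gs^{\ast}=s^{\ast}g=s^{\ast}$ for every $g\in\langle\varphi^{\ast}(s)\rangle$, and in particular for every generator $g\in\varphi^{\ast}(s)$; dually $s^{+}$ is a two-sided zero of $\langle\varphi^{+}(s)\rangle$. For part (1) I would prove $se\ll s$ by verifying $se=(se)^{+}s(se)^{\ast}$ directly: from $(se)e=se$ we get $e\in\varphi^{\ast}(se)$, so the zero property gives $e(se)^{\ast}=(se)^{\ast}$, hence $s(se)^{\ast}=se(se)^{\ast}=se$, and left-multiplying by $(se)^{+}$ yields $(se)^{+}s(se)^{\ast}=(se)^{+}(se)=se$. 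The cases $es\ll s$ and $esf\ll s$ follow the identical template, feeding the absorbing idempotent in from the correct side: $e\in\varphi^{+}(es)$ via $(es)^{+}e=(es)^{+}$ for $es$, and both $e\in\varphi^{+}(esf)$ and $f\in\varphi^{\ast}(esf)$ for $esf$.

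For part (2) the starting point is that an idempotent $e$ satisfies $e\in\varphi^{\ast}(e)\cap\varphi^{+}(e)$, so singleton-richness forces $e^{\ast}=e=e^{+}$, and thus $e$ is the zero of both $\langle\varphi^{\ast}(e)\rangle$ and $\langle\varphi^{+}(e)\rangle$. If $ef=e$ then $f$ is an idempotent right identity of $e$, i.e. $f\in\varphi^{\ast}(e)\subseteq\langle\varphi^{\ast}(e)\rangle$, so the zero property gives $fe=e$ as well; together with $ef=e$ this is exactly $e\le f$. The case $fe=e$ is dual, carried out in $\langle\varphi^{+}(e)\rangle$.

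Part (3) is the substantial one. Given a supposed cycle $s_1\ll\cdots\ll s_n\ll s_1$, I would substitute the defining equalities $s_i=s_i^{+}s_{i+1}s_i^{\ast}$ into one another around the loop to obtain $s_1=Ps_1Q$ with $P=s_1^{+}\cdots s_n^{+}$ and $Q=s_n^{\ast}\cdots s_1^{\ast}$, and then iterate and pass to idempotent powers to get $s_1=P^{\omega}s_1Q^{\omega}$. Setting $g=P^{\omega}$ and $h=Q^{\omega}$, one checks $gs_1=s_1=s_1h$, so $g\in\varphi^{+}(s_1)$ and $h\in\varphi^{\ast}(s_1)$. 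The key move is to identify $g$ with $s_1^{+}$: the zero property gives $s_1^{+}g=s_1^{+}$, while the fact that $P$ begins with the idempotent $s_1^{+}$ gives $s_1^{+}g=g$, whence $g=s_1^{+}=P^{\omega}$; dually $h=s_1^{\ast}=Q^{\omega}$. Reading off the rightmost factor of $P^{\omega}$ then yields $s_1^{+}\leq_{\mathcal{L}}s_n^{+}$, and rotating the starting point of the cycle gives $s_i^{+}\leq_{\mathcal{L}}s_{i-1}^{+}$ for every $i$ (indices read cyclically), so that the $s_i^{+}$ are pairwise $\mathcal{L}$-equivalent idempotents. By part (2), $\mathcal{L}$-equivalent idempotents coincide, so all $s_i^{+}$ equal a common $p$, and dually (using the leftmost factor of $Q^{\omega}$ and the $\mathcal{R}$-version of part (2)) all $s_i^{\ast}$ equal a common $q$. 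The defining relation then collapses to $s_i=ps_{i+1}q=s_{i+1}$, contradicting distinctness.

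I expect the main obstacle to be precisely this collapse of the boundary idempotents. A single step $s_i\ll s_{i+1}$ imposes no comparability whatsoever between $s_i^{+}$ and $s_{i+1}^{+}$, so the $\mathcal{L}$-equivalence of all the $s_i^{+}$ cannot be read off any individual relation; it has to be extracted from the global cyclic product through the identification $P^{\omega}=s_1^{+}$. Once that identification is secured, parts (1) and (2) supply everything else routinely.
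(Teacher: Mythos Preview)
Your argument is correct in all three parts. The zero property of $s^{\ast}$ and $s^{+}$ in the singleton-rich setting is exactly the right tool: since the kernel is a one-point ideal, it absorbs on both sides, and this cleanly gives $e(se)^{\ast}=(se)^{\ast}$, $(es)^{+}e=(es)^{+}$, etc.\ for part~(1); the deduction $ef=e\Rightarrow fe=e$ for part~(2); and the identification $P^{\omega}=s_1^{+}$ (via $s_1^{+}g=g$ from the leading factor and $s_1^{+}g=s_1^{+}$ from the zero property) that drives part~(3). The cyclic $\mathcal{L}$-comparison of the $s_i^{+}$ and the collapse via part~(2) are carried out correctly; note that you are implicitly using the observation recorded in the paper that in a singleton-rich semigroup one cannot have distinct idempotents $e,f$ with $ef=e$ and $fe=f$, which is exactly what part~(2) supplies when applied in both directions.

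As for comparison with the paper: there is nothing to compare. The paper does not prove this lemma at all; it simply imports it from the companion article~\cite{Sha-Det2} (Lemma~3.1 and Proposition~3.2 there). Your proof is a self-contained derivation from the singleton-rich hypothesis, which is more informative for a reader of this paper in isolation. If anything, your part~(3) makes transparent \emph{why} cycles are impossible---the boundary idempotents $s_i^{+}$ and $s_i^{\ast}$ are forced to coincide around the loop---whereas the citation leaves that mechanism hidden.
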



%
We examine cases where \( \ll \) may fail to be transitive under certain specified conditions. 
These conditions naturally lead us to define and study a class of semigroups, which we refer to as \( \lll \)-smooth semigroups. 
We then proceed to compute their determinant and explore its properties.


The relation $\ll$ is reflexive and antisymmetric (\cite[Proposition~3.2.(2)]{Sha-Det2}). 
However the relation $\ll$ may not be transitive (\cite[Example~6.1]{Sha-Det2}). 
Therefore, we define $\lll$ as the smallest partially ordered set containing $\ll$.


Let $t', t \in S$ with $t' \lll t$.  
We say that the pair $(t,t')$ is \emph{$n$-essential} if there exist elements $v_1, v_2, \dots, v_n$ such that  
\[
t' \ll v_1 \ll v_2 \ll \cdots \ll v_n \ll t,
\]
but no sequence $w_1, w_2, \dots, w_m$ with $m < n$ satisfies  
\[
t' \ll w_1 \ll w_2 \ll \cdots \ll w_m \ll t.
\]
In particular, if \( (t, t') \) is \emph{$1$-essential}, then there exists an element \( v_1 \) such that  
\(
t' \ll v_1 \ll t,
\)
but \( t \not\ll t' \).


We say that \(S\) is pseudo $\ll$-transitive, if 
for every elements $u, s, t\in S$ with $u \lll st$ and $u \not\ll st$ and $(u,st)$ is $n$-essential,
there exist elements $v_1,\ldots,v_n$ such that
\[
u\ll v_1 \ll \cdots \ll v_n \ll st
\]
and that satisfy the following conditions~\mylabel{pseudo}{($\#$)}:
\begin{enumerate}
\item \(u^{+}=v_1^{+}=\cdots =v_n^{+}=\varphi^{(u^{+} s ,t u^{\ast})} (= \varphi)\),
\item \( \varphi^{(u^{+} s ,t v_n^{\ast}\cdots v_1^{\ast}u^{\ast})} =  \varphi\),
\item \(v_n \ll t\).
\end{enumerate}

When \( u \lll t \) with \( u \not\ll t \) for some elements \( u \) and \( t \) in \( S \), the condition above ensures that there exist elements \( v_1, \ldots, v_n \) such that  
\[ 
u \ll v_1 \ll \cdots \ll v_n \ll t 
\]  
with \( u^{+} = v_1^{+} = \cdots = v_n^{+} \). 
We use $t^{+}$ in place of $s$. 

We used a C$\#$ program to verify the following conjecture.

\begin{con}\label{pseudolltransitive}
If $\abs{S} \leq 8$, then $S$ is pseudo $\ll$-transitive. 
\end{con}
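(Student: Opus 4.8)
The statement is a finite, decidable assertion: under the paper's standing hypotheses (\(S\) finite, \(\mathbb{C}S\) unital, \(S\) singleton-rich), there are only finitely many semigroups with \(\abs{S}\le 8\) up to isomorphism, so the plan is to verify the pseudo \(\ll\)-transitivity condition~\ref{pseudo} for each of them by exhaustive search. I would draw the list of all semigroups of order at most \(8\) from an existing classification (for instance the \texttt{Smallsemi} library), and discard immediately every semigroup that is not singleton-rich or whose complex algebra is not unital, since only these fall under the running assumptions and need to be examined; both tests are cheap (singleton-richness is read off from \(\varphi^{\ast}\) and \(\varphi^{+}\), and unitality of \(\mathbb{C}S\) reduces to solving a single linear system for a two-sided identity).

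For each surviving semigroup \(S\) I would precompute, once and for all, the combinatorial data on which condition~\ref{pseudo} depends: the idempotent set \(E(S)\); the maps \(\varphi^{\ast}\) and \(\varphi^{+}\); the idempotents \(s^{\ast}\) and \(s^{+}\) for every \(s\) (available by singleton-richness); the binary relation \(\ll\), stored as the adjacency matrix of a directed graph via the test \(s=s^{+}ts^{\ast}\); and its transitive closure \(\lll\), which is a partial order by Lemma~\ref{eslls}(3). With this in hand I would run over all ordered triples \((u,s,t)\in S^{3}\), form the product \(st\), and retain those with \(u\lll st\) but \(u\not\ll st\). For each such triple I would compute the essentiality index \(n\) as the length minus one of a shortest \(\ll\)-path \(u\ll v_1\ll\cdots\ll v_n\ll st\) (a breadth-first search in the \(\ll\)-graph), enumerate all \(\ll\)-chains of that minimal length, and test whether at least one chain meets requirements (1)--(3), in particular the equalities of left idempotents \(u^{+}=v_1^{+}=\cdots=v_n^{+}\) and the two instances of the \(\varphi^{(\cdot,\cdot)}\) operator. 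If some minimal chain passes for every qualifying triple and every admissible semigroup, the conjecture holds for \(\abs{S}\le 8\).

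The principal obstacle is scale rather than subtlety: the number of semigroups of order \(8\) is on the order of \(1.8\times10^{9}\) up to isomorphism and anti-isomorphism, so the enumeration must be engineered carefully. The bulk of these fail singleton-richness or unitality and are rejected by the preliminary filter, but the survivors must still be processed with bitset encodings of \(\ll\) and \(\lll\), memoised values of \(s^{\ast},s^{+}\), and symmetry reductions to keep the triple-and-chain search tractable. Two points require care. First, condition~\ref{pseudo} is invariant under isomorphism but not obviously under anti-isomorphism, since it singles out the left idempotents \(v_i^{+}\) and a fixed order of arguments in \(\varphi^{(\cdot,\cdot)}\); so if the source library stores one representative per isomorphism-and-anti-isomorphism class, each anti-isomorphic dual must also be tested (equivalently, both \ref{pseudo} and its dual checked). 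Second, the whole verification reduces to evaluating the operator \(\varphi^{(\cdot,\cdot)}\) of~\cite{Sha-Det2} correctly inside (1)--(2), so its implementation must faithfully match that definition.

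Finally, I would note that no structural shortcut is apparent: a human-checkable proof would require classifying precisely how \(\ll\) can fail to be transitive in a singleton-rich semigroup and then showing that the chains witnessing \(u\lll st\) can always be chosen so that conditions~\ref{pseudo} are forced. The absence of such an argument is exactly why the statement is recorded as a conjecture and settled by machine for the range \(\abs{S}\le 8\), leaving open whether it persists for all finite singleton-rich \(S\).
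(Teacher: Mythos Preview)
Your proposal is essentially the same approach the paper takes: the paper offers no mathematical argument for this statement at all, recording it as a conjecture preceded only by the sentence ``We used a C\# program to verify the following conjecture.'' Your plan is a more detailed blueprint for exactly that kind of exhaustive computer check, and your observations about the scale at order~$8$ and the need to test anti-isomorphic duals separately (since condition~\ref{pseudo} is not self-dual) are genuine implementation points the paper does not address.
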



We recall some definitions from \cite{Sha-Det2} and adapt them for $\ll$-transitive semigroups.

We define the function $\varphi$ 
as follows:
\[\varphi(s,t)=(s^{\ast}t)^{+},\]
for every $s,t\in S$.

Now, for $s$ and $t$ in $S$, based on the function $\varphi$, 
we define the sequences $s_i$ and $t_i$ 
as follows:
\begin{center}
\begin{tabular}{ll}
$s_0=s$, & $t_0=t;$\\
\text{and} &\\
$s_{i+1}=s\varphi(s_i,t_i)$, & $t_{i+1} = \varphi(s_i,t_i)t$;
\end{tabular}
\end{center}

By~\cite[Lemmas~4.4]{Sha-Det2}, the sequences $\varphi(s_i,t_i)$ converges to an equal element. 
We denote the converge of these sequences by $\varphi^{(s,t)}$. 

Let $Z\colon \mathbb{C}S \rightarrow \mathbb{C}S$ be a map given by $Z(s) =\sum\limits_{s'\lll s}s'$ on $s \in S$ with a linear extension.

We define the multiplication
$\sharp \colon \mathbb{C}S \times \mathbb{C}S \rightarrow \mathbb{C}S$ 
as follows:\\
\begin{equation*}
s\sharp t = 
\begin{cases}
  st,& \text{if}\ s^{+}=(st)^{+}, t^{\ast}=(st)^{\ast}\ \text{and}\ s^{\ast}=t^{+};\\ 
  0,& \text{otherwise},
\end{cases}
\end{equation*}
for every $s,t\in S$. 
Also, we define the multiplication
$\sharp\limits^{e} \colon \mathbb{C}S \times \mathbb{C}S \rightarrow \mathbb{C}S$, for some $e\in E(S)$,
as follows:\\
\begin{equation*}
s\sharp^{e} t = 
\begin{cases}
  st & \text{if}\ s^{+}=(st)^{+}, t^{\ast}=(st)^{\ast}\ \text{and}\ s^{\ast}=t^{+}=e;\\
  0& \text{otherwise}.
\end{cases}
\end{equation*}


We define $(s',t')\ll (s,t)$ if the following conditions hold:
\begin{enumerate}
\item \(s' \ll s\),
\item \(t' \lll t\),
\item $s'
\stackrel{\varphi}{\sharp}
t'
\neq 
0$ where \(\varphi = \varphi^{
({s'}^{+} s,t{t'}^{\ast})
}\),
\item If \( t' \not\ll t \), then \(s't'\not\ll st\), \(s't'=t'\) and there exist elements \(t_1,\ldots,t_n\), referred to as \mylabel{pairll}{(\(\ast\))}, such that
\begin{enumerate}
\item \(t'\ll t_1 \ll t_2 \ll \cdots \ll t_n \ll t\), 
\item \({t'}^{+}=t_1^{+}=t_2^{+}=\cdots =t_n^{+} = \varphi\),
\item \(\varphi^{
({s'}^{+} s,tt_n^{\ast}\cdots t_2^{\ast}t_1^{\ast}{t'}^{\ast})
} = 
\varphi
\).
\end{enumerate}
\end{enumerate}

Now, we define the following multiplication on $\mathbb{C}S \times \mathbb{C}S$
\begin{equation*}
Z(s)\boldsymbol{*}Z(t) = 
\sum\limits_{(s',t')\ll (s,t)} s't'.
\end{equation*}

\begin{prop}\label{s*tHom}
Suppose that \(S\) is pseudo $\ll$-transitive.
We have $Z(s)\boldsymbol{*}Z(t)= Z(st)$, for all $s,t\in S$.
\end{prop}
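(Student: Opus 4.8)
The plan is to prove the identity by comparing coefficients in the basis $S$ of $\mathbb{C}S$. On the right, $Z(st)=\sum_{u\lll st}u$ has coefficient $1$ at each $u$ with $u\lll st$ and $0$ elsewhere; on the left, $Z(s)\boldsymbol{*}Z(t)=\sum_{(s',t')\ll(s,t)}s't'$, so the coefficient of a fixed $u\in S$ is the number of pairs $(s',t')$ with $(s',t')\ll(s,t)$ and $s't'=u$. Thus the proposition is equivalent to the claim that $(s',t')\mapsto s't'$ is a bijection from $\{(s',t'):(s',t')\ll(s,t)\}$ onto $\{u\in S:u\lll st\}$, and I would establish it by exhibiting an explicit two-sided inverse.

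First I would show the map lands in the target, i.e.\ $(s',t')\ll(s,t)$ implies $s't'\lll st$. From $s'\ll s$ we have $s'={s'}^{+}s{s'}^{\ast}$, and the nonvanishing of $s'\stackrel{\varphi}{\sharp}t'$ in condition~(3) yields the linking relations ${s'}^{\ast}={t'}^{+}=\varphi$, ${s'}^{+}=(s't')^{+}$ and ${t'}^{\ast}=(s't')^{\ast}$. When $t'\ll t$ the product collapses to $s't'={s'}^{+}(s\varphi t){t'}^{\ast}$, so Lemma~\ref{eslls}(1) puts $s't'$ below $s\varphi t$, and it remains to chain this with $s\varphi t\lll st$. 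When $t'\not\ll t$, condition~(4) already records $s't'=t'$ and furnishes the chain~(\ref{pairll}) from $t'$ up to $t$, which I would lift to one terminating at $st$.

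For the inverse, fix $u\lll st$. Any preimage $(s',t')$ must satisfy ${s'}^{+}=u^{+}$, ${t'}^{\ast}=u^{\ast}$ and ${s'}^{\ast}={t'}^{+}=\varphi$ with $\varphi=\varphi^{(u^{+}s,\,tu^{\ast})}$, read off from condition~(3) under $s't'=u$. If $u\ll st$ the constraints $s'\ll s$ and $t'\ll t$ then determine the pair uniquely as $s'=u^{+}s\varphi$ and $t'=\varphi tu^{\ast}$; I would check that this pair indeed satisfies $(s',t')\ll(s,t)$ and that $u^{+}s\varphi tu^{\ast}=u$. The last identity is exactly where the convergence of the sequence $\varphi(s_i,t_i)$ to $\varphi^{(u^{+}s,\,tu^{\ast})}$ established in~\cite{Sha-Det2} is invoked. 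This yields both existence and uniqueness of the preimage over $\{u:u\ll st\}$, matching the $\ll$-transitive case of~\cite{Sha-Det2}.

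The crux, and the step I expect to be the main obstacle, is the remaining case $u\lll st$ with $u\not\ll st$; this is precisely where transitivity of $\ll$ fails, and it corresponds to preimages with $t'\not\ll t$. Here condition~(4) forces $t'=u$ and constrains $s'={s'}^{+}s{s'}^{\ast}$ with ${s'}^{+}=u^{+}$ and ${s'}^{\ast}=\varphi$, while one must still exhibit the ascending chain of~(\ref{pairll}) with constant first idempotent and verify the $\varphi$-compatibility in~(4c). All of this is delivered by the pseudo $\ll$-transitivity hypothesis applied to $(u,st)$: it produces a chain $u\ll v_1\ll\cdots\ll v_n\ll st$ with $u^{+}=v_1^{+}=\cdots=v_n^{+}=\varphi^{(u^{+}s,\,tu^{\ast})}$, with the identity $\varphi^{(u^{+}s,\,tv_n^{\ast}\cdots v_1^{\ast}u^{\ast})}=\varphi$, and with $v_n\ll t$, as recorded in~(\ref{pseudo}). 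These $v_i$ are exactly the elements demanded by~(\ref{pairll}); the equalities $u^{+}=v_i^{+}$ give $\varphi=u^{+}$ and fix $s'=u^{+}su^{+}$, and the conditions in~(\ref{pseudo}) then let me confirm both that this single pair lies over $u$ and that no other does. I anticipate the entire difficulty to reside in propagating these idempotent equalities and the compatibility of the $\varphi^{(\cdot,\cdot)}$-values along the chain; with the pseudo $\ll$-transitivity chain in hand, the remainder is bookkeeping.
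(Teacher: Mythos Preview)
Your proposal is correct and follows essentially the same approach as the paper: both reduce the identity to a bijection between $\{(s',t'):(s',t')\ll(s,t)\}$ and $\{u:u\lll st\}$, build the explicit inverse $u\mapsto(u^{+}s\varphi,\,\varphi t u^{\ast})$ in the case $u\ll st$ via \cite{Sha-Det2}, and in the case $u\not\ll st$ invoke the pseudo $\ll$-transitivity chain~\ref{pseudo} to force $t'=u$, $s'=u^{+}su^{+}$ and supply the data~\ref{pairll}. One small imprecision: in the forward direction for $t'\ll t$ you write ``chain this with $s\varphi t\lll st$'', but the paper (following \cite{Sha-Det2}) instead uses the defining property of $\varphi=\varphi^{({s'}^{+}s,\,t{t'}^{\ast})}$ to get $s't'={s'}^{+}st{t'}^{\ast}$ directly and then applies Lemma~\ref{eslls}(1)---the intermediate relation $s\varphi t\lll st$ is neither needed nor obviously available.
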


\begin{proof}
First, we prove that \( s't' \lll st \) if \( (s',t') \ll (s,t) \).

If \( t' \ll t \), then following the same reasoning as in~\cite[Proposition 5.2]{Sha-Det2}, we conclude that \( s't' \ll st \).

Now, suppose that \( t' \lll t \) with \( t' \not\ll t \).  
Then, there exists elements \( t_1, \ldots, t_n \) such that Condition~\ref{pairll} holds.  
Thus, we have  
\[
s' = {s'}^{+} s \varphi,
\quad \text{and} \quad
t' = {t'}^{+} t_1^{+} \cdots t_n^{+} t t_n^{\ast} \cdots t_1^{\ast} {t'}^{\ast} = \varphi t t_n^{\ast} \cdots t_1^{\ast} {t'}^{\ast}.
\] 

Hence,  
\[
s't' 
= {s'}^{+} s \varphi t t_n^{\ast} \cdots t_1^{\ast} {t'}^{\ast}
= {s'}^{+} s \varphi^{({s'}^{+} s,t t_n^{\ast} \cdots t_1^{\ast} {t'}^{\ast})} t t_n^{\ast} \cdots t_1^{\ast} {t'}^{\ast}
= {s'}^{+} s t t_n^{\ast} \cdots t_1^{\ast} {t'}^{\ast}.
\]

By Lemma~\ref{eslls}.(1), we conclude that \( s't' \lll st \).


Secondly, we need to show that if $u\lll st$, for some $u$ in $S$, there exists a unique pair $(s^{\circ},t^{\circ}) \ll (s,t)$ 
with $s^{\circ} t^{\circ} = u$.
We encounter two possibilities: either $u\ll st$ or $u\not\ll st$.


If $u\ll st$, a pair meeting this condition exists by following the same logic as the proof in~\cite[Proposition 5.2]{Sha-Det2}.

%


Now, suppose that \( u \not\ll st \). Since \( S \) is pseudo \(\ll\)-transitive,  
there exists elements \( v_1, \ldots, v_n \) such that  
\[
u \ll v_1 \ll \cdots \ll v_n \ll st,
\]
satisfying Conditions~\ref{pseudo}.

Thus, we have  
\[
u 
= 
u^{+} v_1^{+} \cdots v_n^{+} s t v_n^{\ast} \cdots v_1^{\ast} u^{\ast} = u^{+} s t v_n^{\ast} \cdots v_1^{\ast} u^{\ast}.
\]

Let 
\begin{align*}
s'     &= u^{+} s \varphi,\\
t'     &= \varphi t v_n^{\ast}\cdots v_1^{\ast}u^{\ast}, 
t_1= \varphi t v_n^{\ast}\cdots v_1^{\ast},
t_2= \varphi t v_n^{\ast}\cdots v_2^{\ast},
\ldots,
t_n= \varphi t  v_n^{\ast}
\end{align*}
where \(\varphi = \varphi^{(u^{+} s ,t u^{\ast})}\).

By Lemma~\ref{eslls}.(1), we deduce that  
\[
s' \ll s, \quad \text{and} \quad t' \ll t_1 \ll \cdots \ll t_n \ll t.
\]

Furthermore, we obtain  
\begin{align*}
s't' 
&= u^{+} s \varphi t v_n^{\ast} \cdots v_1^{\ast} u^{\ast} 
= u^{+} s \varphi^{(u^{+} s ,t v_n^{\ast} \cdots v_1^{\ast} u^{\ast})} t v_n^{\ast} \cdots v_1^{\ast} u^{\ast} \\
&= u^{+} s t v_n^{\ast} \cdots v_1^{\ast} u^{\ast} = u.
\end{align*} 
  
Additionally, by~\cite[Lemmas~4.7]{Sha-Det2}, we have  
\(
{s'}^{\ast} = {t'}^{+} = \varphi.
\)
Since  
\[
u = u^{+} s t v_n^{\ast} \cdots v_1^{\ast} u^{\ast} = u^{+} s \varphi t v_n^{\ast} \cdots v_1^{\ast} u^{\ast},
\]
it follows that \({s'}^{+} = u^{+}\) and \({t'}^{\ast} = u^{\ast}\), which implies  
\(
s' \stackrel{\varphi}{\sharp} t' \neq 0.
\)

Since \( v_n \ll t \), we have \( t_n = v_n \). Similarly, for all other indices \( i \), we get \( t_i = v_i \) and \( t' = u \).  
Thus, \(s't'=t'\) and we obtain  
\[
{t'}^{+} = t_1^{+} = t_2^{+} = \cdots = t_n^{+} = \varphi,
\]
and  
\[
\varphi^{
({s'}^{+} s, t t_n^{\ast} \cdots t_2^{\ast} t_1^{\ast} {t'}^{\ast})
} = 
\varphi.
\]

If \( t' \ll t \), then \( u=s't' \ll st \), a contradiction.  
Otherwise, the sequence \[ t' \ll t_1 \ll \cdots \ll t_n \ll t \] satisfies the other conditions,  
which implies that \( (s', t') \ll (s,t) \).
  

There is then our desired pair. 
Now, we prove the uniqueness of this existence by contradiction. 
In doing so, we consider distinct pairs \[(s'_1,t'_1), (s'_2,t'_2)\ll (s,t)\] in which \(s'_1, s'_2 \ll s\), \(t'_1, t'_2 \lll t\) and \(s'_1t'_1 = s'_2t'_2 = u\) with the following conditions, each leading to a contradiction:

(\(t'_1, t'_2 \ll t\)):\\
Following the same logic as the proof in~\cite[Proposition 5.2]{Sha-Det2}, we have $s'_1=s'_2$ and $t'_1=t'_2$. 

(\(t'_1 \ll t\), \(t'_2 \not\ll t\)):\\
Since \( (s'_1, t'_1) \ll (s,t) \) and \( t'_1 \ll t \), it follows that \( s'_1 t'_1 \ll st \) and, consequently, \( u \ll st \).  
Moreover, since \( (s'_2, t'_2) \ll (s,t) \) and \(t'_2 \not\ll t\), we obtain \( t'_2 = s'_2 t'_2 = u\), a contradiction with \( s'_2 t'_2 \not\ll st \).

(\(t'_1, t'_2 \not\ll t\)):\\
First, we have $t'_1=s'_1t'_1 =u= s'_2t'_2= t'_2$.

Since \(s'_1t'_1 = s'_2t'_2 = u\) and 
\(
s'_1\stackrel{\varphi^{({s'}_1^{+}s,t{t'}_1^{\ast})}}{\sharp} t'_1,
s'_2\stackrel{\varphi^{({s'}_2^{+}s,t{t'}_2^{\ast})}}{\sharp} t'_2
\neq 0\),
we have 
${s'}_1^{+}={s'}_2^{+}=u^{+}$. 
Moreover, we have
\({s'}_1^{\ast}= {s'}_2^{\ast}=  {t'}_1^{+} (=u^{+}).\)
Therefore, we have ${s'}_1=u^{+}su^{+}={s'}_2$.

The result follows.
\end{proof}

By applying Möbius inversion, the map \( Z \) is bijective. Thus, by Proposition~\ref{s*tHom}, we obtain the following theorem.

\begin{thm}\label{Zisom}
Let \(S\) be a pseudo $\ll$-transitive semigroup.
The mapping $Z$ is an isomorphism of $\mathbb{C}$-algebras. 
\end{thm}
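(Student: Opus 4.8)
The plan is to deduce Theorem~\ref{Zisom} directly from Proposition~\ref{s*tHom} together with the observation that $Z$ is bijective, which is the standard Möbius-inversion argument. Since $\lll$ is a partial order on the finite set $S$ (it is the transitive closure of the reflexive antisymmetric relation $\ll$, so acyclicity from Lemma~\ref{eslls}.(3) guarantees it is genuinely a partial order), the map $Z(s)=\sum_{s'\lll s}s'$ is unitriangular with respect to any linear extension of $\lll$: its matrix in the basis $S$ is lower-triangular with $1$'s on the diagonal. Hence $Z$ is invertible as a linear map, with inverse given explicitly by the Möbius function $\mu$ of the poset $(S,\lll)$, namely $Z^{-1}(s)=\sum_{s'\lll s}\mu(s',s)\,s'$. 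First I would state this bijectivity cleanly and record that $Z$ is therefore a $\mathbb{C}$-linear automorphism of the vector space $\mathbb{C}S$.

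Next I would bring in the multiplicative structure. Proposition~\ref{s*tHom} establishes $Z(s)\boldsymbol{*}Z(t)=Z(st)$ for all $s,t\in S$. The operation $\boldsymbol{*}$ is, by its defining formula, bilinear on $\mathbb{C}S\times\mathbb{C}S$, so this identity on basis elements extends by bilinearity to $Z(a)\boldsymbol{*}Z(b)=Z(ab)$ for all $a,b\in\mathbb{C}S$, where $ab$ denotes the product in the original algebra $\mathbb{C}S$. Because $Z$ is a bijection, every pair of elements of $\mathbb{C}S$ can be written as $(Z(a),Z(b))$, and so this shows that $Z$ intertwines the ordinary multiplication on the source with the multiplication $\boldsymbol{*}$ on the target. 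In other words, $Z\colon(\mathbb{C}S,\cdot)\to(\mathbb{C}S,\boldsymbol{*})$ is a bijective map satisfying $Z(a\cdot b)=Z(a)\boldsymbol{*}Z(b)$, which is precisely a $\mathbb{C}$-algebra isomorphism onto $(\mathbb{C}S,\boldsymbol{*})$.

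I would then address the one point that requires a word of care: for $Z$ to be an \emph{algebra} isomorphism in the sense claimed, one wants $(\mathbb{C}S,\boldsymbol{*})$ to be an associative $\mathbb{C}$-algebra and one wants the statement interpreted as an isomorphism between $(\mathbb{C}S,\cdot)$ and the image algebra $(\mathbb{C}S,\boldsymbol{*})$. Associativity of $\boldsymbol{*}$ and the existence of a unit are inherited for free by transport of structure: since $Z$ is a linear bijection and $Z(a\cdot b)=Z(a)\boldsymbol{*}Z(b)$, the product $\boldsymbol{*}$ is forced to equal $Z\circ{\cdot}\circ(Z^{-1}\times Z^{-1})$, and the associativity and unitality of $(\mathbb{C}S,\cdot)$ transport across the bijection to $(\mathbb{C}S,\boldsymbol{*})$. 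Thus no independent verification of the ring axioms for $\boldsymbol{*}$ is needed.

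The only genuine obstacle is already discharged by the hypotheses: the multiplicativity $Z(s)\boldsymbol{*}Z(t)=Z(st)$ is exactly the content of Proposition~\ref{s*tHom}, which in turn relies on $S$ being pseudo $\ll$-transitive, and the invertibility of $Z$ is the purely combinatorial Möbius-inversion fact. I do not expect any hard step here; the theorem is essentially a formal consequence of the proposition, and the proof should consist of (i) noting bijectivity of $Z$ via Möbius inversion on $(S,\lll)$, (ii) extending the proposition's identity bilinearly, and (iii) concluding that a bijective multiplicative linear map is an algebra isomorphism, with associativity and the unit transported across $Z$.
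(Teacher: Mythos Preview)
Your proposal is correct and matches the paper's own argument exactly: the paper proves the theorem in one sentence, invoking M\"obius inversion to get bijectivity of $Z$ and then citing Proposition~\ref{s*tHom} for multiplicativity. Your additional remarks about transport of structure for associativity and unitality are sound elaborations, but the core proof is the same.
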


We denote the isomorphic image of \( (S, \cdot) \) under \( Z \) by \( (S, \ast) \).

Let $s,t\in S$.
By applying M\"{o}bius inversion, we have  \[
s= \displaystyle\sum\limits_{s'\lll s}\mu_S(s', s)Z(s')\
\text{and}\ 
t= \displaystyle\sum\limits_{t'\lll t}\mu_S(t', t)Z(t').\]
Then, we get that
\begin{align*}
s\boldsymbol{*}t
&=
\displaystyle\sum\limits_{\substack{s'\lll s,\\ t'\lll t}}\mu_S(s', s)\mu_S(t', t)Z(s')\boldsymbol{*}Z(t')\\
&=
\displaystyle\sum\limits_{\substack{s'\lll s,\\ t'\lll t}}\mu_S(s', s)\mu_S(t', t)\sum\limits_{(s'',t'')\ll (s',t')} s''t''\\
&=
\displaystyle\sum\limits_{\substack{s'\lll s,\\ t'\lll t}}\mu_S(s', s)\mu_S(t', t)\sum\limits_{\substack{s''\lll s', t''\lll t',\\ (s'',t'')\ll (s',t')}} s''t''\\
&=
\sum\limits_{\substack{s''\lll s,\\ t''\lll t}}\big[\sum\limits_{\substack{s''\lll s'\lll s,\\ t''\lll t'\lll t,\\ 
(s'',t'')\ll (s',t')}}
\mu_S(s', s)\mu_S(t', t)
\big]
s''t''\\
&=
\sum\limits_{\substack{s''\lll s,\\ t''\lll t}}
\big[
\sum\limits_{s''\lll s'\lll s}
\big(
\sum\limits_{\substack{t''\lll t'\lll t,\\ (s'',t'')\ll (s',t')}}
\mu_S(t', t)
\big)
\mu_S(s', s)
\big]
s''t''
.
\end{align*}
We define the function \(\xi\colon S^{(s)}\times S^{(t)}\rightarrow \mathbb{C}\), 
where \(S^{(s)}=\{s'' \in S \mid s'' \lll s\}\), for every \(s \in S\), as follows:
\begin{align*}
\xi(s'',t'') 
&=
\sum\limits_{s''\lll s'\lll s}
\big(
\sum\limits_{\substack{t''\lll t'\lll t,\\ (s'',t'')\ll (s',t')}}
\mu_S(t', t)
\big)
\mu_S(s', s),
\end{align*}
for every \(s'' \in S^{(s)}\) and \(t'' \in S^{(t)}\).

A natural question that arises is whether a semigroup \( S \) with  
\( S \not\in \pv{ECom} \), \( S \) being singleton-rich, and the mapping \( Z \) being an isomorphism of \( \mathbb{C} \)-algebras  
contains elements \( s \) and \( t \) such that \( s^{\ast} \neq t^{+} \) and \( s \boldsymbol{*} t \neq 0 \),  
in contrast to semigroups in \( \pv{ECom} \), which do not contain such elements.  
This question was not mentioned or answered.

In the remainder of this section, we address this question.

Let
\begin{align*}
E_S=\{(e,f) \mid 
&\ e,f\in E(S), ef \neq fe\ \text{and there are no idempotents}\ e'\ \text{and}\ f'\\ 
&\ \text{such that}\ e'\leq e\ \text{and}\ f'\leq f, (e',f')\neq (e,f)\ \text{and}\ e'f'\neq f'e'\}.
\end{align*}

If $S$ is finite and $S \not\in \pv{ECom}$, then the subset $E_S$ is clearly non-empty.

\begin{lem}\label{splusneqstrastneqLem1}
Let $S \not\in \pv{ECom}$ where $S$ is singleton-rich. 
If $(e,f) \in E_S$, then one of the following conditions hold:
\begin{enumerate}
\item we have $(ef)^{+} = e$ and $(ef)^{\ast} = f$;
\item we have $(fe)^{+} = f$ and $(fe)^{\ast} = e$.
\end{enumerate}
Moreover, if $(ef)^{+} \neq e$ or $(ef)^{\ast} \neq f$, then $ef$ is an idempotent.
\end{lem}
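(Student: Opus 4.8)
The plan is to work directly with the four idempotents \((ef)^{+}, (ef)^{\ast}, (fe)^{+}, (fe)^{\ast}\) and to extract the clean factorizations \(ef=(ef)^{+}(ef)^{\ast}\) and \(fe=(fe)^{+}(fe)^{\ast}\) before invoking the minimality built into the definition of \(E_S\).

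First I would record the elementary facts. For any \(s\), every element of \(\varphi^{+}(s)\) is a left identity of \(s\), hence so is every element of \(\langle\varphi^{+}(s)\rangle\); in particular \(s^{+}s=s\), and dually \(ss^{\ast}=s\). Write \(a=(ef)^{+}\) and \(b=(ef)^{\ast}\). Since \(e(ef)=ef\) and \((ef)f=ef\), we have \(e\in\varphi^{+}(ef)\) and \(f\in\varphi^{\ast}(ef)\). Because \(a\) (resp.\ \(b\)) is the single element of the kernel of \(\langle\varphi^{+}(ef)\rangle\) (resp.\ \(\langle\varphi^{\ast}(ef)\rangle\)) and that kernel is an ideal, absorption gives \(ae=a=ea\) and \(bf=b=fb\); by Lemma~\ref{eslls}.(2) this yields \(a\leq e\) and \(b\leq f\). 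Using \(a\leq e\), \(b\leq f\), and the identities \(a(ef)=ef\), \((ef)b=ef\), one computes \(ef=af=eb=ab\). The symmetric computation gives \(c:=(fe)^{+}\leq f\), \(d:=(fe)^{\ast}\leq e\), and \(fe=ce=fd=cd\).

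Next I would dispose of the \emph{moreover} clause, which also supplies the commutation I need. Suppose \(a\neq e\) or \(b\neq f\), i.e.\ \((a,b)\neq(e,f)\) with \(a\leq e\), \(b\leq f\). Then \((a,b)\) is a proper sub-pair, so by the defining minimality of \(E_S\) it must commute: \(ab=ba\). Two commuting idempotents have idempotent product, and \(ef=ab\), so \(ef\) is idempotent, proving the \emph{moreover} statement. The same argument applied to \((c,d)\) shows that if \((fe)^{+}\neq f\) or \((fe)^{\ast}\neq e\) then \(fe\) is idempotent; I will use this symmetric version below in contrapositive form.

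Finally I would prove the dichotomy by showing that the failure of (1) forces (2). Assume \((a,b)\neq(e,f)\), and first treat \(a\neq e\), so \(a<e\). The pair \((a,f)\) is a proper sub-pair, hence \(af=fa\); combined with \(af=ef\) this gives \(ef=fa\) and then \(fef=f(fa)=fa=ef\), so \(f\in\varphi^{+}(ef)\). Ideal absorption in \(\langle\varphi^{+}(ef)\rangle\) now forces \(fa=a\), i.e.\ \(a\leq f\), whence \(ef=af=a\) is idempotent with \(a\leq e,f\). Then \((fe)^{2}=(fef)e=(ef)e=ae=a=ef\), so if \(fe\) were idempotent we would get \(fe=ef\), contradicting \((e,f)\in E_S\). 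Thus \(fe\) is not idempotent, and the contrapositive of the symmetric \emph{moreover} statement yields \((fe)^{+}=f\) and \((fe)^{\ast}=e\), i.e.\ (2). The remaining case \(a=e\), \(b\neq f\) is the left–right dual: using that \((e,b)\) commutes one obtains \(ef=b\) idempotent with \(b\leq e,f\), again \((fe)^{2}=b=ef\), and the same contradiction forces (2). The main obstacle is precisely this third paragraph: turning the single commutation \(af=fa\) handed over by minimality into the collapse \(ef=a\) via the kernel/ideal structure, and then recognizing \((fe)^{2}=ef\) as the lever that rules out \(fe\) being idempotent. The identity chains \(ef=af=eb=ab\) and \((fe)^{2}=ef\) are routine but must be handled carefully with the one-sided relations \(ae=a\), \(bf=b\), since in a singleton-rich semigroup one-sided and two-sided comparisons coincide only through Lemma~\ref{eslls}.(2).
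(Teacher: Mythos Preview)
Your proof is correct but follows a genuinely different route from the paper's. The paper argues by contradiction that \((ef)^{+}=e\) or \((fe)^{+}=f\): assuming both fail, it uses minimality on the pair \(((ef)^{+},f)\) to get \((ef)^{+}f=f(ef)^{+}\), deduces \(ef=fef\) is idempotent, and symmetrically \(fe=efe\); then \((ef,fe)\) with \(ef\leq e\), \(fe\leq f\) is a proper sub-pair that still fails to commute, contradicting \((e,f)\in E_S\). A dual argument gives \((ef)^{\ast}=f\) or \((fe)^{\ast}=e\), and the two mixed combinations are eliminated separately. Your approach instead isolates the clean factorization \(ef=ab\) with \(a=(ef)^{+}\leq e\), \(b=(ef)^{\ast}\leq f\) right away, which lets you dispatch the \emph{moreover} clause in one line (any proper sub-pair commutes, so \(ef=ab=ba\) is idempotent) and then use its symmetric version in contrapositive form as the engine for the dichotomy. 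The decisive computation you add, namely that once \(ef\) collapses to the idempotent \(a\) one has \((fe)^{2}=ef\), is not in the paper; it replaces the paper's \(2\times 2\) case split by a single non-idempotency argument for \(fe\). Your version is more economical and makes the role of the kernel structure (the absorption \(fa=a\) once \(f\in\varphi^{+}(ef)\)) more transparent; the paper's version, on the other hand, never needs the identity \(ef=ab\) and works entirely with one-sided conditions, which keeps it closer to how the partial order \(\leq\) is used elsewhere in the section.
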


\begin{proof}
Suppose that $(ef)^{+} \neq e$ and $(fe)^{+} \neq f$.

Since $(ef)^{+} \neq e$, $(ef)^{+} \leq e$ and $(e,f) \in E_S$, we have $(ef)^{+}f = f(ef)^{+}$. Then, we get that
\[ef = (ef)^{+}ef = (ef)^{+}f = (ef)^{+} f f = f (ef)^{+}f = f(ef)^{+}ef = fef.\]
Hence, $ef$ is an idempotent. It follows that $(ef)^{+} = ef$ and, thus, $ef \leq e$ with $ef \neq e$. 

Similarly, as $(fe)^{+} \neq f$, we have $fe=efe$. It follows that $(fe)^{+} = fe$ and, thus, $fe \leq f$ with $fe \neq f$.

Then, we get that \[feef = fef = ef \neq fe = efe = effe,\]
a contradiction with our assumption.

Therefore, we have $(ef)^{+} = e$ or $(fe)^{+} = f$.
Additionally, we can conclude that $(ef)^{\ast} = f$ or $(fe)^{\ast} = e$.
The result follows if either $(ef)^{+} = e$ and $(ef)^{\ast} = f$, or $(fe)^{+} = f$ and $(fe)^{\ast} = e$.

Suppose that $(ef)^{+} = e$, $(ef)^{\ast} \neq f$, $(fe)^{+} \neq f$ and $(fe)^{\ast} = e$.
As $(ef)^{\ast} \neq f$ and $(fe)^{+} \neq f$, then by a similar argument as above, we have $ef= efe = fe$, a contradiction.
%
Also, we have a contradiction, if $(ef)^{+} \neq e$, $(ef)^{\ast} = f$, $(fe)^{+} = f$ and $(fe)^{\ast} \neq e$.

The result follows.
\end{proof}

Note that in Lemma~\ref{splusneqstrastneqLem1}, there may exist idempotents $e$ and $f$ that satisfy the first condition but do not satisfy the second, or vice versa (see the following Example).

\begin{example}\label{Ex01}
Let \( S \) be a semigroup with the following Cayley table, where the dot represents the zero element of \( S \) in the table.
\begin{center}
\begin{tabular}{ c | c c c c } 
\textbf{\cc{$S$}} & $\cc{y}$ & $\cc{z}$ & $\cc{u}$ & $\cc{t}$\\ \hline 
$\cc{y}$  & . & . & . & $y$ \\ 
$\cc{z}$  & . & . & $z$  & .\\ 
$\cc{u}$  & $y$  & . & $u$  & .\\ 
$\cc{t}$  & . & $z$  & $z$  & $t$ \\ 
\end{tabular}
\end{center}
We have \( {z}^{+} = t \) and \( {z}^{\ast} = u \), but \( ut = 0 \).
\end{example}

\begin{lem}\label{splusneqstrastneqLem3}
Let \( S \) be a finite semigroup such that \( S \notin \pv{ECom} \) and \( S \) is singleton-rich.  
Then there exist distinct idempotents \( e \) and \( f \) in \( S \) such that \( ef \neq fe \),  
\( (ef)^{+} = e \), \( (ef)^{\ast} = f \), and the following holds:

\begin{itemize}
    \item there are no elements \( s_1, \ldots, s_n \) in \( S \), each distinct from both \( e \) and \( ef \), such that  
    \[
    ef \ll s_1 \ll \cdots \ll s_n \ll e,
    \]
    \item and no elements \( t_1, \ldots, t_m \) in \( S \), each distinct from both \( f \) and \( ef \), such that  
    \[
    ef \ll t_1 \ll \cdots \ll t_m \ll f.
    \]
\end{itemize}
\end{lem}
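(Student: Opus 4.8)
The plan is to reduce first to a convenient ``good'' pair in $E_S$ and then to force the two chain conditions by a minimality argument that collapses any putative intermediate chain back onto $ef$, $e$ or $f$. \emph{Step 1: producing a good pair.} Since $S$ is finite and $S \notin \pv{ECom}$, the set $E_S$ is non-empty, and its defining condition is symmetric in the two coordinates, so $(e,f) \in E_S$ if and only if $(f,e) \in E_S$. Applying Lemma~\ref{splusneqstrastneqLem1} to a member of $E_S$ and interchanging $e$ and $f$ if necessary, I obtain a pair---still written $(e,f) \in E_S$---with $(ef)^{+} = e$ and $(ef)^{\ast} = f$; call such a pair \emph{good}. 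For a good pair the sandwich identity gives $ef = e\,e\,f$ and $ef = e\,f\,f$, whence $ef \ll e$ and $ef \ll f$. Also $e \neq f$, since they do not commute, and $ef \notin \{e,f\}$: if $ef = e$ then $e \leq f$ by Lemma~\ref{eslls}.(2), forcing $ef = fe$, and symmetrically $ef \neq f$. Thus $\{ef,e\}$ and $\{ef,f\}$ are honest two-element bottom intervals of $(S,\lll)$.

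\emph{Step 2: reformulation and minimization.} The two displayed conditions amount to saying that $e$ covers $ef$ and $f$ covers $ef$ in the partial order $\lll$. Indeed, an element strictly $\lll$-between $ef$ and $e$ lies on some $\ll$-chain $ef = a_0 \ll a_1 \ll \cdots \ll a_k = e$, and after deleting repetitions of $ef$ and $e$ (using antisymmetry of $\lll$) its interior yields a forbidden chain with every entry distinct from $ef$ and $e$; the converse is immediate. I therefore choose, among all good pairs, one minimizing $N(e,f) = \abs{\{x : ef \lll x \lll e\}} + \abs{\{x : ef \lll x \lll f\}}$; this is well defined and at least $4$, and it suffices to prove $N(e,f) = 4$.

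\emph{Step 3: the collapse.} Suppose $N(e,f) > 4$; without loss of generality there is a $\ll$-chain $ef = a_0 \ll a_1 \ll \cdots \ll a_k = e$ with $k \geq 2$ and $a_i \notin \{ef,e\}$ for $0 < i < k$. Its bottom step gives $ef = e\,a_1\,f$ (as $(ef)^{+}=e$, $(ef)^{\ast}=f$), its top step gives $a_{k-1} = a_{k-1}^{+} e\, a_{k-1}^{\ast}$, and each interior step gives $a_i = a_i^{+} a_{i+1} a_i^{\ast}$. The aim is to extract from the idempotents $a_i^{+}, a_i^{\ast}$ a pair of idempotents lying below $e$ and below $f$ and distinct from $(e,f)$; by the $E_S$-minimality of $(e,f)$ such a pair must commute, and feeding these commutations back through the chain identities forces $a_1 \in \{ef,e\}$, contradicting the choice of the chain. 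This contradiction shows that no such chain exists, lowering $N$ and contradicting minimality; the same argument applied to $f$ settles the second condition.

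\emph{Main obstacle.} The genuine difficulty is the extraction in Step~3. An intermediate $a_i$ need not be idempotent, and since $\ll$ is not transitive one cannot directly compare $a_i^{+}, a_i^{\ast}$ with $e$ and $f$ in the natural order, nor invoke the minimality of $E_S$, which constrains only idempotents. The delicate work is therefore to show that the idempotents produced along the chain (such as $a_i^{+}$, $a_i^{\ast}$, $e\,a_i^{+}e$ and their products with $e$ and $f$) are dominated by $e$ or by $f$, and that once they are forced to commute with $e$ and $f$ the identity $ef = e\,a_1\,f$ degenerates, returning $a_1$ to $ef$ or $e$. Making this degeneration precise while preserving the orientation $(ef)^{+}=e$, $(ef)^{\ast}=f$ throughout---which is where singleton-richness enters---is the crux of the argument.
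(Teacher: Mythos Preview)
Your proposal is a plan, not a proof: Step~3 is the whole argument and you explicitly leave it open, naming the extraction as the ``genuine difficulty'' without carrying it out. Moreover, the minimization you set up does not power the extraction. Membership of $(e,f)$ in $E_S$ only forces idempotents $e' \leq e$ and $f' \leq f$ with $(e',f') \neq (e,f)$ to commute, but the idempotents $a_i^{+}, a_i^{\ast}$ arising from a chain between $ef$ and $e$ are not known to lie below $e$ or below $f$, so you cannot invoke $E_S$-minimality on them. Nor do you have any mechanism to produce from the chain a new good pair with strictly smaller $N$: even if you obtained a new pair, you say nothing about the second interval defining $N$ for it.

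The paper's route differs in a way that matters. It fixes not a pair minimizing $N$ but an idempotent $e$ minimal for the property ``some $(e,g)$ lies in $E_S$'' (condition $(\star)$). This one-sided minimality is exactly what is needed to show $(es_i^{\ast})^{+} = e$ once one finds along a chain $ef \ll s_1 \ll \cdots \ll s_n \ll e$ an index $i$ with $es_i^{\ast} \neq s_i^{\ast}e$; such an $i$ exists, because assuming $e$ commutes with every $s_j^{\ast}$ forces $s_n = e$. One then gets a new pair $(e, s_i^{\ast})$---same $e$, new second coordinate---with $(es_i^{\ast})^{+}=e$, $(es_i^{\ast})^{\ast}=s_i^{\ast}$, $ef \lll es_i^{\ast} \ll e$ and $es_i^{\ast} \neq ef$. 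Iterating either terminates at an $h$ with no nontrivial chain from $eh$ to $e$, or produces pairwise distinct elements $eh_1 \lll eh_2 \lll \cdots$, contradicting Lemma~\ref{eslls}.(3). The $f$-side is then handled separately: a putative chain $ef \ll t_1 \ll \cdots \ll t_m \ll f$ is left-multiplied by $e$ to obtain a chain ending at $e$, and the already established $e$-side covering forces $t_m = ef$. Your sketch contains neither the correct minimality hypothesis (on $e$ alone) nor the replace-$f$-and-iterate mechanism; without these, Step~3 does not close.
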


\begin{proof}
Since $S$ is finite, the subset $E_S$ is finite as well. 
Thus, there exists an idempotent \( e \) such that \( (e, f) \in E_S \) for some idempotent \( f \), and the following condition, described in ~\ref{itm:star}, holds:
\begin{enumerate}
\item[\mylabel{itm:star}{($\star$)}] if there are idempotents $e'$ and $g$ with $e' \leq e$ and $(e', g) \in E_S$, then $e' = e$.
\end{enumerate}

First, we prove that
\begin{enumerate}
\item[\mylabel{itm:starstar}{($\star\star$)}]
if $eh \neq he$ with $(eh)^{+} = e$ and $(eh)^{\ast} = h$ for some idempotent $h$,
and there exist elements $s_1, \ldots, s_n$ distinct from $e$ and $eh$ such that \[eh \ll s_1 \ll \cdots \ll s_n \ll e,\] 
then there exists an integer $1\leq i\leq n$ such that $es_i^{\ast} \neq s_i^{\ast}e$, $(es_i^{\ast})^{\ast} = s_i^{\ast}$, $(es_i^{\ast})^{+} = e$, $h \neq s_i^{\ast}$, and $eh \lll es_i^{\ast} \ll e$. 
\end{enumerate}


As $eh \ll s_1 \ll \cdots \ll s_n \ll e$, we have $s_n = s_n^{+}es_n^{\ast}$ and $eh = es_1^{+}\cdots s_n^{+}es_n^{\ast}\cdots s_1^{\ast}h$.

We prove by contradiction that there exists an integer $1\leq i\leq n$ such that $es_i^{\ast} \neq s_i^{\ast}e$.
Then, suppose that $es_i^{\ast} = s_i^{\ast}e$, for all $1\leq i\leq n$.
Hence, we have \[eh = es_1^{+}\cdots s_n^{+}es_n^{\ast}\cdots s_1^{\ast}h=es_1^{+}\cdots s_n^{+}s_n^{\ast}\cdots s_1^{\ast}eh.\] 
It follows that $eh = (es_1^{+}\cdots s_n^{+}s_n^{\ast}\cdots s_1^{\ast})^{\omega}eh$ and, thus, $e \leq (es_1^{+}\cdots s_n^{+}s_n^{\ast}\cdots s_1^{\ast})^{\omega}$.
As, $e(es_1^{+}\cdots s_n^{+}s_n^{\ast}\cdots s_1^{\ast})^{\omega}=(es_1^{+}\cdots s_n^{+}s_n^{\ast}\cdots s_1^{\ast})^{\omega}$, we get that $(es_1^{+}\cdots s_n^{+}s_n^{\ast}\cdots s_1^{\ast})^{\omega} = e$.
It follows that $e\leq s_1^{\ast}$. Hence, we have 
\[eh = es_1^{+}\cdots s_n^{+}s_n^{\ast}\cdots s_2^{\ast}eh.\] 
Again, we have $eh = (es_1^{+}\cdots s_n^{+}s_n^{\ast}\cdots s_2^{\ast})^{\omega}eh$ and, thus, $e = (es_1^{+}\cdots s_n^{+}s_n^{\ast}\cdots s_2^{\ast})^{\omega}$. Then, we get that $e\leq s_2^{\ast}$. Similarly, we conclude that $e\leq s_3^{\ast}, \ldots, s_n^{\ast},s_n^{+}$.

Thus, we have $s_n = e$, a contradiction.
Therefore, there exist an integer $i$ such that $es_i^{\ast} \neq s_i^{\ast}e$ and if $i<j\leq n$ then we have $es_j^{\ast} = s_j^{\ast}e$.

Now, we prove that $(es_i^{\ast})^{\ast} = s_i^{\ast}$.
As \[s_i = s_i^{+}\cdots s_n^{+}es_n^{\ast}\cdots s_i^{\ast}=s_i^{+}\cdots s_n^{+}s_n^{\ast}\cdots s_{i-1}^{\ast}es_i^{\ast},\]
we have $s_i^{\ast} \leq (es_i^{\ast})^{\ast}$. Hence, we have $(es_i^{\ast})^{\ast} = s_i^{\ast}$.

By contradiction, we prove that $(es_i^{\ast})^{+} = e$.
Suppose that $(es_i^{\ast})^{+} = e'$ with $e' \neq e$.
If $e's_i^{\ast} \neq s_i^{\ast}e'$, then there exist idempotents $e'' \leq e'$ and $s' \leq s_i^{\ast}$ such that $(e'', s') \in E_S$, which contradicts Condition~\ref{itm:star}.
Then, we have $e's_i^{\ast} = s_i^{\ast}e'$.
It follows that \[es_i^{\ast} = (es_i^{\ast})^{+}es_i^{\ast} = e'es_i^{\ast} = e's_i^{\ast} = s_i^{\ast}e'.\] 
Hence, $(es_i^{\ast})^{+} (=e') \leq s_i^{\ast}$ and, thus, $es_i^{\ast} = e'$.
Now, since $(es_i^{\ast})^{\ast} = s_i^{\ast}$, we have $e' = s_i^{\ast}$, which contradicts the assumption that $es_i^{\ast} \neq s_i^{\ast}e$.

If $h = s_i^{\ast}$, then we have 
\[eh 
= 
es_1^{+}\cdots s_n^{+}es_n^{\ast}\cdots s_{i+1}^{\ast}s_i^{\ast}s_{i-1}^{\ast}\cdots s_1^{\ast}h
=
es_1^{+}\cdots s_n^{+}s_n^{\ast}\cdots s_{i+1}^{\ast}ehs_{i-1}^{\ast}\cdots s_1^{\ast}h
,\] 
as we have $es_j^{\ast} = s_j^{\ast}e$, for $i<j\leq n$.
Then, we get that
\[eh 
= 
(es_1^{+}\cdots s_n^{+}s_n^{\ast}\cdots s_{i+1}^{\ast})^{\omega}eh(s_{i-1}^{\ast}\cdots s_1^{\ast}h)^{\omega}.\] 

It follows that $(es_1^{+}\cdots s_n^{+}s_n^{\ast}\cdots s_{i+1}^{\ast})^{\omega}=e$.
Hence, we have $e\leq s_{i+1}^{\ast}$. 
Similarly, we have \[e\leq s_{i+2}^{\ast},s_{i+3}^{\ast},\ldots, s_{n}^{\ast},s_n^{+}.\]

If $i\neq n$, then we have $s_n=s_n^{+}es_{n}^{\ast}=e$, a contradiction. Otherwise, we have $s_n=s_n^{+}es_{n}^{\ast}=eh$, again contradiction.

As $(es_i^{\ast})^{\ast} = s_i^{\ast}$, $(es_i^{\ast})^{+} = e$, we have $es_i^{\ast} \ll e$.
Also, as $es_j^{\ast} = s_j^{\ast}e$, for $i <j\leq n$, we have  
\[eh 
= 
es_1^{+}\cdots s_n^{+}es_n^{\ast}\cdots s_{i+1}^{\ast}s_i^{\ast}s_{i-1}^{\ast}\cdots s_1^{\ast}h
=
es_1^{+}\cdots s_n^{+}s_n^{\ast}\cdots s_{i+1}^{\ast}es_i^{\ast}s_{i-1}^{\ast}\cdots s_1^{\ast}h
.\] 
Then, by Lemma~\ref{eslls}.(1), we have $eh\lll es_i^{\ast}$.

Hence, \ref{itm:starstar} is established.


Since $(e, f) \in E_S$, by Lemma~\ref{splusneqstrastneqLem1}, by symmetry, we may assume that $(e, f)$ satisfies its first condition 
(where the second condition requires the dual version of \ref{itm:starstar}),
namely that $(ef)^{+} = e$ and $(ef)^{\ast} = f$.
Suppose that there exist elements \( s_1,\ldots,s_n \) distinct from \( e \) and \( ef \) such that 
\[ ef \ll s_1 \ll \cdots \ll s_n \ll e,\] Then by~\ref{itm:starstar}, there exists an integer $1\leq i\leq n$ such that \( es_i^{\ast} \neq s_i^{\ast}e \), \( (es_i^{\ast})^{\ast} = s_i^{\ast} \), \( (es_i^{\ast})^{+} = e \), \(f\neq s_i^{\ast}\), and $ef\lll es_i^{\ast}\ll e$. 
As \(f\neq s_i^{\ast}\), \( (es_i^{\ast})^{\ast} = s_i^{\ast} \) and \( (ef)^{\ast} = f \), we have \(ef \neq es_i^{\ast}\).

We may proceed by substituting \( f \) with \( s^{\ast} \) and continue the substitution. Since \( S \) is finite, by~\ref{itm:starstar}, exactly one of the following holds:
\begin{enumerate}
\item[\mylabel{itm:sharp1}{($\sharp1$)}] There exists an idempotent \( h \) such that \( eh \neq he \), with \( (eh)^{+} = e \) and \( (eh)^{\ast} = h \), and there do not exist element \( s_1,\ldots,s_m \) distinct from \( e \) and \( eh \) such that \( eh \ll s_1\ll\cdots\ll s_m \ll e \).

\item[\mylabel{itm:sharp2}{($\sharp2$)}] There exist pairwise distinct idempotents \( h_1, \ldots, h_n \) such that \( eh_i \neq h_i e \), with \( (eh_i)^{+} = e \), \( (eh_i)^{\ast} = h_i \), and \( eh_{i}\lll eh_{i+1} \) for \( 1 \leq i \leq n \).   
\end{enumerate}

By Lemma~\ref{eslls}.(3), the second item, \ref{itm:sharp2}, does not hold.

Now, suppose that there are elements \( t_1, \ldots, t_m \) in \( S \), each distinct from both \( f \) and \( ef \), such that  
\( ef \ll t_1 \ll \cdots \ll t_m \ll f.\)
    
Hence, we have
\(
ef = e t_1^{+} \cdots t_m^{+} f t_m^{\ast} \cdots t_1^{\ast} f.
\)
It follows that
\[
ef \ll et_1^{+} \cdots t_m^{+} f t_m^{\ast} \cdots t_1^{\ast} \ll et_1^{+} \cdots t_m^{+} f t_m^{\ast} \cdots t_2^{\ast} \ll \cdots \ll et_1^{+} \ll e.
\]
By above, in this sequence, there is an element equal to \( ef \) somewhere in the middle. Moreover:
\begin{itemize}
    \item every element to the left of it is equal to \( ef \),
    \item every element to the right is equal to \( e \).
\end{itemize}

Since \( ef \neq fe \), we have \( et_1^{+} \cdots t_m^{+} f  = ef \). Then, the occurrence of \( ef \) in the sequence must be this element or one to its right.

Suppose that \( et_1^{+} \cdots t_j^{+} = ef \) and \( et_1^{+} \cdots t_{j-1}^{+} = e \) (if $j=1$, we omit the second condition). Then it must be that:
\[
e\leq t_1^{+}, \ldots, t_{j-1}^{+} = e\quad \text{and} \quad f \leq t_j^{+}, \ldots, t_m^{+}, t_m^{\ast}, \ldots, t_1^{\ast}.
\]
This implies \( t_m = ef \), which contradicts our assumption.

The result follows.
\end{proof}

By Lemma~\ref{splusneqstrastneqLem3}, if $S$ is singleton-rich and $S \not\in \pv{ECom}$, then there exist distinct idempotents $e$ and $f$ such that 
$ef \not\in E(S)$.


\begin{thm}\label{splusneqstrastneqThm}
Let \( S \not\in \pv{ECom} \) be a pseudo $\ll$-transitive semigroup.  
Then there exist elements \( s, t \in S \) such that \( s^{\ast} \neq t^{+} \) and \( s \boldsymbol{*} t \neq 0 \).
\end{thm}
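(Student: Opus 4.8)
The plan is to build the required pair directly from Lemma~\ref{splusneqstrastneqLem3}. First I would apply that lemma (legitimate since $S \notin \pv{ECom}$ is singleton-rich) to fix distinct idempotents $e,f$ with $ef \neq fe$, $(ef)^{+}=e$, $(ef)^{\ast}=f$, $ef \notin E(S)$, and with no intermediate elements in the relevant chains. Since $(ef)^{+}\,e\,(ef)^{\ast} = e\cdot e\cdot f = ef$ we have $ef \ll e$, and similarly $ef \ll f$; combined with the two ``no intermediate element'' clauses of the lemma this collapses the $\lll$-intervals to covering pairs, $\{w : ef \lll w \lll e\} = \{ef,e\}$ and $\{w : ef \lll w \lll f\} = \{ef,f\}$. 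Now set $s=e$ and $t=f$. As $e,f$ are idempotent, $s^{\ast}=e$ and $t^{+}=f$, so $s^{\ast}\neq t^{+}$ because $e\neq f$. Everything then reduces to proving $e\boldsymbol{*}f \neq 0$, for which I rely on pseudo $\ll$-transitivity through Theorem~\ref{Zisom} and the expansion $e\boldsymbol{*}f = \sum_{s''\lll e,\, t''\lll f}\xi(s'',t'')\,s''t''$.

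My target is the coefficient of the basis element $ef$ in $e\boldsymbol{*}f$, which equals $\sum \xi(s'',t'')$ over all pairs with $s''\lll e$, $t''\lll f$ and $s''t''=ef$. The collapsed intervals make the Möbius numbers explicit: $\mu_S(e,e)=\mu_S(f,f)=1$ and $\mu_S(ef,e)=\mu_S(ef,f)=-1$. I would first cut down to the pairs that can actually be nonzero: by condition~(3) in the definition of $(s'',t'')\ll (s',t')$, any contributing pair satisfies $s''\stackrel{\varphi}{\sharp} t''\neq 0$, and unwinding $\sharp$ together with $s''t''=ef$ forces ${s''}^{+}=e$, ${t''}^{\ast}=f$ and ${s''}^{\ast}={t''}^{+}=g$ for a single idempotent $g$.

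The crux is to show $g\in\{e,f\}$, so that the only surviving pairs are the corners $(ef,f)$ (for $g=f$) and $(e,ef)$ (for $g=e$), together with $(e,f)$, whose mismatched idempotents $e\neq f$ already give $\xi(e,f)=0$. The idea is that an idempotent $g\notin\{e,f\}$ would produce, via Lemma~\ref{eslls}.(1), an element strictly between $ef$ and $e$ (from the factor with ${s''}^{\ast}=g$) or strictly between $ef$ and $f$ (from the factor with ${t''}^{+}=g$), contradicting the two clauses of Lemma~\ref{splusneqstrastneqLem3}; the degenerate pair $(ef,ef)$ is excluded because $ef\notin E(S)$ makes its product $efef$ distinct from $ef$. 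A direct evaluation of the two surviving $\xi$-values — using $\varphi^{(ef,f)}=f$ and $\varphi^{(e,f)}=e$ to test the $\sharp$-conditions — gives $\xi(ef,f)=-1$ and $\xi(e,ef)=0$ (its two inner Möbius contributions $-1$ and $+1$ cancel). Hence the coefficient of $ef$ in $e\boldsymbol{*}f$ is $-1\neq 0$, so $e\boldsymbol{*}f\neq 0$ and the theorem follows.

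The hard part will be the reduction to $g\in\{e,f\}$. The subtlety is that $s''\lll e$ and $t''\lll f$ only record membership in the transitive closure of $\ll$, whereas Lemma~\ref{splusneqstrastneqLem3} forbids $\ll$-chains; so one must convert the intermediate idempotent $g$ into an explicit chain $ef \ll \cdots \ll e$ or $ef \ll \cdots \ll f$ before the lemma can be invoked. This is precisely the manoeuvre carried out inside the proof of Lemma~\ref{splusneqstrastneqLem3} through the auxiliary claim~\ref{itm:starstar}, and reusing that bookkeeping is the technical heart of the argument; once the contributing pairs are pinned down, the remaining Möbius computation is routine.
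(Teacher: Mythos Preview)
Your proposal is correct and follows essentially the same route as the paper: pick $e,f$ from Lemma~\ref{splusneqstrastneqLem3}, compute the coefficient of $ef$ in $e\boldsymbol{*}f$ via the $\xi$-function, and show it equals $-1$ by verifying $\xi(ef,f)=-1$ and $\xi(e,ef)=0$ exactly as the paper does.

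One presentational point: your detour through the ``middle idempotent'' $g={s''}^{\ast}={t''}^{+}$ is redundant. The chain you build (via Lemma~\ref{eslls}(1), stripping idempotents from the expansion of $s''t''$) is a $\ll$-chain from $ef$ through $s''$ up to $e$; combined with the collapsed interval this gives $s''\in\{ef,e\}$ \emph{directly}, which is exactly how the paper argues. The statement $g\in\{e,f\}$ is then a consequence of $s''\in\{ef,e\}$, not an intermediate step toward it, and your inference ``$g=f\Rightarrow (s'',t'')=(ef,f)$, $g=e\Rightarrow (s'',t'')=(e,ef)$'' is precisely what still needs the chain argument (and the paper's subsequent case analysis pinning down $t''$). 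So drop the $g$-framing and state the chain conclusion as $s''\in\{ef,e\}$; then handle the two cases for $t''$ as the paper does.
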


\begin{proof}
By Lemma~\ref{splusneqstrastneqLem3}, there exist distinct idempotents $e$ and $f$ such that $ef \neq fe$, $(ef)^{+} = e$, $(ef)^{\ast} = f$, and there are no elements $s_1,\ldots, s_n$ distinct from $e$, and $ef$ with $ef \ll s_1 \ll \cdots \ll s_n \ll e$ and also there are no elements $t_1,\ldots, t_m$ distinct from $f$, and $ef$ with $ef \ll t_1 \ll \cdots \ll t_m \ll f$.

We prove that \( e \boldsymbol{*} f \neq 0 \).

We have \begin{align*}
\xi(ef,f) 
&=
\sum\limits_{ef\lll s'\lll e}
\big(
\sum\limits_{\substack{f\lll t'\lll f,\\ (ef,f)\ll (s',t')}}
\mu_S(t', f)
\big)
\mu_S(s', e).
\end{align*}

Since \( f \lll t' \lll f \), it follows that \( t' = f \).  
Similarly, from \( ef \lll s' \lll e \), the element \( s' \) must be either \( ef \) or \( e \).  
If \( s' = ef \), then it is straightforward to verify that \( (ef, f) \ll (ef, f) \), since \( \varphi^{(ef, f)} = f \).  
On the other hand, if \( s' = e \), then \( (ef, f) \not\ll (e, f) \), as \( \varphi^{(e, f)} = e \).  
Therefore, we conclude that
\begin{align*}
\xi(ef,f) 
&=
\mu_S(f, f)
\mu_S(ef, e)=-1.
\end{align*}

Let \(e' \in S^{(e)}\) and \(f' \in S^{(f)}\) with $e'f'=ef$ and $\xi(e',f') \neq 0$.

Then, there exist elements $e_1,\ldots,e_n,f_1,\ldots,f_m$ such that 
\[e'\ll e_1 \ll \cdots \ll e_n\ll e\ \text{and}\ f' \ll f_1 \ll \cdots \ll f_m \ll f.\]
Hence, we have
\[e' = {e'}^{+}{e_1}^{+}\cdots {e_n}^{+}e{e_n}^{\ast}\cdots{e_1}^{\ast}{e'}^{\ast}\
\text{and}\
f' = {f'}^{+}{f_1}^{+}\cdots {f_m}^{+}f{f_m}^{\ast}\cdots{f_1}^{\ast}{f'}^{\ast}.\]
It follows that
\begin{align*}
e'f'
&={e'}^{+}{e_1}^{+}\cdots {e_n}^{+}e{e_n}^{\ast}\cdots{e_1}^{\ast}{e'}^{\ast}{f'}^{+}{f_1}^{+}\cdots {f_m}^{+}f{f_m}^{\ast}\cdots{f_1}^{\ast}{f'}^{\ast}\\
&\ll
{e'}^{+}{e_1}^{+}\cdots {e_n}^{+}e{e_n}^{\ast}\cdots{e_1}^{\ast}{e'}^{\ast}{f'}^{+}{f_1}^{+}\cdots {f_m}^{+}f{f_m}^{\ast}\cdots{f_1}^{\ast}\\
&\ll
{e'}^{+}{e_1}^{+}\cdots {e_n}^{+}e{e_n}^{\ast}\cdots{e_1}^{\ast}{e'}^{\ast}{f'}^{+}{f_1}^{+}\cdots {f_m}^{+}f{f_m}^{\ast}\cdots{f_2}^{\ast}\\
&\ll
\cdots\\
&
\ll
{e'}^{+}{e_1}^{+}\cdots {e_n}^{+}e{e_n}^{\ast}\cdots{e_1}^{\ast}{e'}^{\ast} (=e')
\ll
{e'}^{+}{e_1}^{+}\cdots {e_n}^{+}e{e_n}^{\ast}\cdots{e_1}^{\ast}\\
&\ll
{e'}^{+}{e_1}^{+}\cdots {e_n}^{+}e{e_n}^{\ast}\cdots{e_2}^{\ast}
\ll
\cdots
\ll
{e'}^{+}{e_1}^{+}\cdots {e_n}^{+}e
\ll
{e_1}^{+}\cdots {e_n}^{+}e
\ll
\cdots
\ll
e.
\end{align*}

As \( e'f' = ef \), there exists an element equal to \( ef \) in the middle of our sequence.  
Moreover, all elements to its left in the sequence are equal to \( ef \), and all elements to its right are equal to \( e \).

Hence, we have two cases: \( e' = ef \) or \( e' = e \).

First, suppose that \( e' = ef \).  
Then, since
\[
{e'}^{+}{e_1}^{+}\cdots {e_n}^{+}e{e_n}^{\ast}\cdots{e_1}^{\ast}{e'}^{\ast}{f'}^{+} = ef,
\]
we have \( f \leq {f'}^{+} \).  
Similarly, we have \( f \leq {f_i}^{+} \) and \( f \leq {f_i}^{\ast} \) for all \( 1 \leq i \leq m \).  
It follows that \( f = f \). We have already computed that $\xi(e',f')=\xi(ef,f) =-1$

Now, suppose that \( e' = e \).  
Since \( \xi(e', f') \neq 0 \) and \( e' = e \), it follows that \( {f'}^{+} = e \).

The element
\[
{e'}^{+}{e_1}^{+}\cdots {e_n}^{+}e{e_n}^{\ast}\cdots{e_1}^{\ast}{e'}^{\ast}{f'}^{+}{f_1}^{+}\cdots {f_m}^{+}f
\]
is in our sequence.  
This element cannot be equal to \( e \); otherwise, we would have \( e \leq f \), contradicting the assumption that \( (ef)^{\ast} = f \).  
Hence, we must have
\[
{e'}^{+}{e_1}^{+}\cdots {e_n}^{+}e{e_n}^{\ast}\cdots{e_1}^{\ast}{e'}^{\ast}{f'}^{+}{f_1}^{+}\cdots {f_m}^{+}f = ef.
\]

Then, appending \( {f_m}^{\ast} \) to the end gives:
\[
{e'}^{+}{e_1}^{+}\cdots {e_n}^{+}e{e_n}^{\ast}\cdots{e_1}^{\ast}{e'}^{\ast}{f'}^{+}{f_1}^{+}\cdots {f_m}^{+}f{f_m}^{\ast} = ef,
\]
which implies \( f \leq {f_m}^{\ast} \).  
Similarly, we obtain \( f \leq {f_i}^{\ast} \) for all \( 1 \leq i \leq m \).

Thus, there exists an integer \( j \) such that
\[
{e'}^{+}{e_1}^{+}\cdots {e_n}^{+}e{e_n}^{\ast}\cdots{e_1}^{\ast}{e'}^{\ast}{f'}^{+}{f_1}^{+}\cdots {f_j}^{+} = e,
\]
and
\[
{e'}^{+}{e_1}^{+}\cdots {e_n}^{+}e{e_n}^{\ast}\cdots{e_1}^{\ast}{e'}^{\ast}{f'}^{+}{f_1}^{+}\cdots {f_{j+1}}^{+} = ef.
\]

It follows that \( e \leq {f'}^{+}, {f_1}^{+}, \ldots, {f_j}^{+} \), and \( f \leq {f_{j+1}}^{+}, \ldots, {f_m}^{+} \).  
Therefore, we conclude that \( f' = ef \).

We have \begin{align*}
\xi(e',f')=\xi(e,ef) 
&=
\sum\limits_{e\lll s'\lll e}
\big(
\sum\limits_{\substack{ef\lll t'\lll f,\\ (e,ef)\ll (s',t')}}
\mu_S(t', f)
\big)
\mu_S(s', e).
\end{align*}
Since \( e \lll s' \lll e \), it follows that \( s' = e \).  
Similarly, from \( ef \lll t' \lll f \), the element \( t' \) must be either \( ef \) or \( f \).  
If \( t' = ef \), then \( (e, ef) \ll (e, ef) \), since \( \varphi^{(e, ef)} = e \).  
Also, if \( t' = f \), then \( (e, ef) \ll (e, f) \), as \( \varphi^{(e, f)} = e \).  
Therefore, we conclude that
\begin{align*}
\xi(e,ef) 
&=\mu_S(ef, f)\mu_S(e, e)+\mu_S(f, f)
\mu_S(e, e)=-1+1=
0.
\end{align*}

Therefore, the coefficient of \( ef \) in \( e \boldsymbol{*} f \) is non-zero, and the result follows.
\end{proof}


\section{The determinant of \( \lll \)-smooth semigroup}\label{DetNotInEcom}

In this section, we focus on a class of singleton-rich semigroups called \( \lll \)-smooth semigroups and compute their determinants. The definition of \( \lll \)-smooth semigroups extends the concept of \( \ll \)-transitive semigroups given in \cite[Definition 5.4]{Sha-Det2}, providing a precise definition for this class of semigroups.

\begin{deff}\label{ConditionsnEq7}   
Let \(S\) be a pseudo $\ll$-transitive singleton-rich. 
We say that $S$ is $\lll$-smooth if, 
for every sequences                           
\(s''\lll s' \lll s\) and \(t''\lll t' \lll t\),
the following statements hold:
\begin{enumerate}
\item
if \(s''\sharp t''\neq 0\), then we have \(\varphi^{({s''}^{+}s',t'{t''}^{\ast})} = \varphi({s''}^{+}s',t'{t''}^{\ast}).\) 
\item we have \((s'',t'')\ll (s',t')\) if and only if \((s'',t'')\ll (s',t)\).
\item if \(s''\lll s'_1 \lll s'_2 \lll s\) then \((s'',t)\ll (s'_2,t)\) implies \((s'',t)\ll (s'_1,t)\).
\end{enumerate}
\end{deff}

We may extend Lemmas 5.6, 5,8 
in \cite{Sha-Det2} to \( \lll \)-smooth semigroups with an entirely similar proof, leading to the following theorem.
 Hence, we omit the proof in this paper. However, its validity relies on the fact that for elements \( u \) and \( t \) in a \( \lll \)-smooth semigroup \( S \), if \( u \lll t \) with \( u \not\ll t \), then there exist elements \( v_1, \ldots, v_n \) satisfying  
\[
u \ll v_1 \ll \cdots \ll v_n \ll t,
\]  
with \( u^{+} = v_1^{+} = \cdots = v_n^{+} \).  

\begin{thm}\label{TheoremMain}
Let \( S \) be a \( \lll \)-smooth semigroup, and let \( s, t \in S \). The following conditions hold:

\begin{enumerate}
    \item If \( s^{\ast} \neq t^{+} \) and \( s \ast t \neq 0 \), then for every \( t' \in S \) such that \( {t'}^{+} = t^{+} \), the following equivalence holds:
    \[
    s \ast t' \neq 0 \quad \text{if and only if} \quad st^{+} \sharp t' \neq 0.
    \]
    Furthermore, if \( s \ast t' \neq 0 \), then we have
    \[
    s \ast t' = \bigg( \sum_{\substack{st^{+} \lll s' \lll s, \\ t^{+} \leq (s^{+} s')^{\ast}}} \mu_S(s', s) \bigg) st'.
    \]

    \item If \( s^{\ast} = t^{+} \), then the following equivalence holds:
    \[
    s \ast t \neq 0 \quad \text{if and only if} \quad s \sharp t \neq 0.
    \]
    Moreover, if \( s \ast t \neq 0 \), then we have
    \(
    s \ast t = st.
    \)
\end{enumerate}
\end{thm}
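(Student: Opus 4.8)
The plan is to read off the coefficients of \(s\ast t\) directly from the function \(\xi\): the computation preceding the theorem gives \(s\ast t=\sum_{s''\lll s,\,t''\lll t}\xi(s'',t'')\,s''t''\), so it suffices to evaluate each \(\xi(s'',t'')\), and this is exactly where the three conditions of Definition~\ref{ConditionsnEq7} are designed to intervene, mirroring the proofs of Lemmas~5.6 and~5.8 in~\cite{Sha-Det2}. The engine is the Möbius identity \(\sum_{a\lll x\lll b}\mu_S(x,b)=\delta_{a,b}\) on each interval of \((S,\lll)\), and the smoothness conditions serve to separate the nested sums so that it can be applied one variable at a time. I would first collapse the inner sum in the second coordinate: by condition~(2) the predicate \((s'',t'')\ll(s',t')\) is equivalent to \((s'',t'')\ll(s',t)\) throughout the range \(t''\lll t'\lll t\), so it factors out of the inner sum and leaves \(\sum_{t''\lll t'\lll t}\mu_S(t',t)=\delta_{t'',t}\). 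Hence every surviving term of \(s\ast t\) has second component \(t\) (and, in the computation of \(s\ast t'\) in part~(1), second component \(t'\)), and \(\xi\) reduces to the single sum \(\xi(s'',t)=\sum_{s''\lll s'\lll s}[(s'',t)\ll(s',t)]\,\mu_S(s',s)\).

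For part~(2), where \(s^{\ast}=t^{+}\), one computes \(\varphi^{(s,t)}=s^{\ast}=t^{+}\), so the pair relation \((s,t)\ll(s,t)\) reduces precisely to the three clauses defining \(s\sharp t\neq 0\); this identifies the top coefficient \(\xi(s,t)\) with the indicator of \(s\sharp t\neq 0\) and pins its value at \(\mu_S(t,t)\mu_S(s,s)=1\). To annihilate the coefficients \(\xi(s'',t)\) with \(s''\neq s\), I would invoke condition~(3), which forces \(\{s':(s'',t)\ll(s',t)\}\) to be an order ideal of the interval \([s'',s]\); substituting back into \(\sum_{s''}\xi(s'',t)\,s''\) and exchanging the order of summation, the inner alternating sums of \(\mu_S(s',s)\) telescope so that only \(s''=s\) survives, whence \(s\ast t=st\) exactly when \(s\sharp t\neq 0\).

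For part~(1), where \(s^{\ast}\neq t^{+}\), the decisive left factor is \(st^{+}\): from \({t'}^{+}=t^{+}\) one has \(st^{+}t'=st'\), and the standing hypothesis \(s\ast t\neq 0\) supplies, in the base case \(t'=t\), that \(st^{+}\sharp t\neq 0\), whence \((st^{+})^{\ast}=t^{+}\) and \((st^{+})^{+}=s^{+}\). I would then use condition~(1), replacing each stabilized \(\varphi^{({s''}^{+}s',t'{t''}^{\ast})}\) by the one-step value \(\varphi({s''}^{+}s',t'{t''}^{\ast})\), to check that \((st^{+},t')\ll(s',t')\) holds if and only if \(t^{+}\leq(s^{+}s')^{\ast}\), which is exactly the constraint on \(s'\) in the claimed coefficient. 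The clause \((st^{+})^{\ast}={t'}^{+}=\varphi\) in the \(\sharp\)-product is then the gate admitting any contributing pair at all, giving \(s\ast t'\neq 0\iff st^{+}\sharp t'\neq 0\), and the surviving coefficient \(\xi(st^{+},t')=\sum_{st^{+}\lll s'\lll s,\ t^{+}\leq(s^{+}s')^{\ast}}\mu_S(s',s)\) is precisely the stated scalar multiplying \(st'\).

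The step I expect to be the main obstacle is the sum over \(s'\): unlike the second-coordinate sum it does not collapse to a bare Möbius identity, and one must argue that condition~(3) — together with the distinction between \(\ll\) and \(\lll\) in the first coordinate — forces the set of contributing \(s'\) into the order-theoretic shape that makes the alternating sum reduce to a single term in part~(2) and to the prescribed Möbius sum in part~(1). Keeping every \(\varphi\)-value under control through condition~(1), so that each \(\sharp\)-product appearing along the way is a genuine semigroup product, is the other delicate point; but both are carried out exactly as for \(\ll\)-transitive semigroups in~\cite{Sha-Det2}, which is why the argument transfers essentially verbatim once the three smoothness conditions are in place.
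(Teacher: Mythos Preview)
Your proposal is correct and takes essentially the same approach as the paper: both defer to the proofs of Lemmas~5.6 and~5.8 in~\cite{Sha-Det2}, noting that the three clauses of Definition~\ref{ConditionsnEq7} are tailored precisely so that those arguments go through with \(\lll\) in place of \(\ll\). The paper in fact omits the proof entirely and simply records this transfer; your sketch supplies more of the mechanism (condition~(2) to collapse the \(t'\)-sum via the M\"{o}bius identity, condition~(3) to control the shape of the surviving \(s'\)-sum, condition~(1) to keep the \(\varphi\)-values stable), which is exactly the structure of the cited lemmas.
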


Let $X_e=\{x_s\in X\mid s\in \widetilde{L}_e\widetilde{R}_e\}$.
Let $\theta_e(X_e)$ be the determinant of the submatrix $\widetilde{L}_e\times\widetilde{R}_e$ of the Cayley table $(S,\boldsymbol{*})$, for every idempotent $e\in S$. Let $M$ be a matrix that by rearranging and shifting the rows and columns of $C(X)$ so that the elements of the subset $\widetilde{L}_e$ being adjacent rows and the elements of the subset $\widetilde{R}_{e}$ being adjacent columns for every idempotent $e\in E(S)$.
Additionally, we define a matrix $M'$ as follows:
\begin{equation*}
[M'_{r,c}]  = \begin{cases}
  [M_{r,c}]& \text{if}\ r\in \widetilde{L}_e\ \text{and}\ c\in\widetilde{R}_{e}\ \text{for some idempotent}\ e;\\
  0& \text{otherwise},
\end{cases}
\end{equation*} for every row $r$ and column $c$.

By the updated versions of Lemmas 5.6 and 5.8 in \cite{Sha-Det2}, which we reference in Theorem~\ref{TheoremMain}, and using a similar approach as in \cite{Sha-Det2}, it follows that \cite[Lemma 5.10]{Sha-Det2} holds for \( \lll \)-smooth semigroups, which demonstrates the equality of the determinants, i.e., \( \DDet M = \DDet M' \). Based on this, we establish the following theorem for the factorization of the determinant of \( S \).

\begin{thm}\label{Main-TheoremST2t}
Suppose that \(S\) is $\lll$-smooth.
For $s \in S$, put \[y_s=\sum\limits_{t\lll s}\mu_S(t, s)x_t.\] 
Then, we have
\[
\theta_S(X)
= \pm\prod\limits_{e\in E(S)}\widetilde{\theta}_{e}(Y_e)
\]
where $Y_e=\{y_s\mid s \in \widetilde{L}_e\widetilde{R}_e\}$.
Moreover, the determinant of $S$ is non-zero if and only if $\widetilde{\theta}_{e}(Y_e)\neq 0$, for every idempotent $e$.
\end{thm}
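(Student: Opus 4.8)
The plan is to transport the whole computation across the algebra isomorphism \(Z\) of Theorem~\ref{Zisom} and then exploit the block structure of the transported multiplication table. First I would record the effect of \(Z\) on the Cayley-table determinant. Since \(Z\) carries the basis \(S\) of \((\mathbb{C}S,\cdot)\) to the basis \(\{Z(s):s\in S\}\) of \((\mathbb{C}S,\boldsymbol{*})\) and \(Z(s)\boldsymbol{*}Z(t)=Z(st)\), the structure constants of \(\cdot\) in the basis \(S\) agree verbatim with those of \(\boldsymbol{*}\) in the basis \(Z(S)\); hence \(\theta_S(X)\) equals the Cayley-table determinant of \((\mathbb{C}S,\boldsymbol{*})\) written in the basis \(Z(S)\). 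I would then change basis from \(Z(S)\) to \(S\) inside \((\mathbb{C}S,\boldsymbol{*})\). The transition matrix is the \(\lll\)-zeta matrix \([\,s'\lll s\,]\), which is unipotent and so has determinant \(1\); the standard change-of-basis formula for based-algebra determinants then shows that \(\theta_S(X)\) equals the Cayley-table determinant of \((\mathbb{C}S,\boldsymbol{*})\) in the basis \(S\) after the linear substitution given by the Möbius matrix, namely \(x_u\mapsto y_u=\sum_{t\lll u}\mu_S(t,u)x_t\). In short, \(\theta_S(X)\) is the determinant of the \(\boldsymbol{*}\)-table evaluated at the variables \(Y\).

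Next I would pass to the rearranged matrix \(M\), obtained by making the rows of each \(\widetilde{L}_e\) and the columns of each \(\widetilde{R}_e\) adjacent; this only reorders rows and columns, so \(\DDet M=\pm\,\theta_S(X)\), the sign being the product of the two permutation signs. The crucial reduction is the identity \(\DDet M=\DDet M'\) recorded just before the statement, the \(\lll\)-smooth analogue of \cite[Lemma~5.10]{Sha-Det2}. I would invoke it here: Theorem~\ref{TheoremMain} controls every entry of the \(\boldsymbol{*}\)-table — on the diagonal blocks (where \(s^{\ast}=t^{+}=e\)) the product \(\boldsymbol{*}\) reduces to the genuine product by part~(2), while every entry with \(s^{\ast}\neq t^{+}\) is, by part~(1), a scalar multiple of a single basis element already governed by a diagonal block — and this is exactly what lets one clear all off-diagonal blocks by row and column operations without changing the determinant, producing \(M'\).

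Finally, \(M'\) is block diagonal with diagonal blocks indexed by the idempotents \(e\), each block being the \(\widetilde{L}_e\times\widetilde{R}_e\) submatrix in the variables \(Y_e\). Expanding the determinant of a block-diagonal matrix gives
\[
\DDet M'=\prod_{e\in E(S)}\widetilde{\theta}_e(Y_e),
\]
whence \(\theta_S(X)=\pm\prod_{e\in E(S)}\widetilde{\theta}_e(Y_e)\). For the last assertion I would argue that \(\mathbb{C}[X]\) is an integral domain, so the product is nonzero precisely when each factor is nonzero; and since the substitution \(y_u=\sum_{t\lll u}\mu_S(t,u)x_t\) is an invertible linear change of variables (the zeta matrix being invertible), \(\widetilde{\theta}_e(Y_e)\) vanishes as a polynomial in \(X\) if and only if \(\widetilde{\theta}_e\) vanishes as a polynomial in \(Y_e\). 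This yields the stated criterion.

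The main obstacle is the identity \(\DDet M=\DDet M'\), i.e.\ showing the off-diagonal blocks contribute nothing; this is the technical heart and is precisely where \(\lll\)-smoothness (through the updated versions of \cite[Lemmas~5.6 and 5.8]{Sha-Det2} feeding Theorem~\ref{TheoremMain}) is indispensable. A secondary point to check is that the diagonal blocks are square, equivalently \(\abs{\widetilde{L}_e}=\abs{\widetilde{R}_e}\), so that the block-diagonal factorization is literally a product of determinants; if some block fails to be square, both the corresponding \(\widetilde{\theta}_e\) and the whole product vanish, so the formula persists, but it is cleanest to confirm squareness from the structure theory beforehand.
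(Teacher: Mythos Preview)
Your proposal is correct and follows essentially the same route as the paper. The paper does not give a self-contained proof of this theorem: it simply observes that the analogues of \cite[Lemmas~5.6, 5.8]{Sha-Det2} (packaged here as Theorem~\ref{TheoremMain}) yield the analogue of \cite[Lemma~5.10]{Sha-Det2}, namely \(\DDet M=\DDet M'\), and then invokes the method of \cite{Sha-Det2}. Your outline is precisely that method, spelled out in more detail --- the transport along \(Z\), the M\"obius change of variables, the rearrangement to \(M\), the key reduction \(\DDet M=\DDet M'\), and the block-diagonal factorization of \(M'\) --- and your identification of \(\DDet M=\DDet M'\) as the technical heart, together with the squareness check on the diagonal blocks, matches the paper's emphasis.
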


In Theorem~\ref{Main-TheoremST2t}, the sign of \(\prod\limits_{e \in E(S)} \widetilde{\theta}_{e}(Y_e)\) depends on whether the number of rearrangements and shifts applied to the rows and columns of \(C(X)\) during the construction of the matrix \(M\) is odd or even.

In the following example, we apply Theorem~\ref{Main-TheoremST2t} to show that the determinant of the semigroup $S$ is non-zero.

\begin{example}\label{Exa2}
Let \( S \) be a semigroup with the following Cayley table.
\begin{center}
\begin{tabular}{ c | c c c c c c c } 
\textbf{\cc{$S$}} & $\cc{y}$ & $\cc{z}$ & $\cc{u}$ & $\cc{t}$ & $\cc{w}$ & $\cc{v}$ & $\cc{q}$\\ \hline 
$\cc{y}$  & . & . & . & . & . & $y$  & $y$ \\ 
$\cc{z}$  & . & . & . & . & $z$  & . & $z$ \\ 
$\cc{u}$  & . & . & . & $y$  & $u$  & $y$  & $u$ \\ 
$\cc{t}$  & . & . & $z$  & . & $z$  & $t$  & $t$ \\ 
$\cc{w}$  & . & $z$  & $z$  & $t$  & $w$  & $t$  & $w$ \\ 
$\cc{v}$  & $y$  & . & $u$  & $y$  & $u$  & $v$  & $v$ \\ 
$\cc{q}$  & $y$  & $z$  & $u$  & $t$  & $w$  & $v$  & $q$ \\ 
\end{tabular}
\end{center}
The semigroup \( S \) is \( \lll \)-smooth, which has been verified using a program developed in C\#.

The table of the semigroup $Z(S)$ 
with the multiplication $\boldsymbol{*}$ is as follows:
\resizebox{\textwidth}{!}{
\begin{tabular}{ l | l l l l l l l } 
\textbf{\cc{$(Z(S),\ast)$}} & $\cc{y}$, & $\cc{z}$, & $y+z+\cc{u}$, & $y+z+\cc{t}$, & $y+\vv{z }+u+t+\cc{w}$, & $\vv{y }+z+u+t+\cc{v}$, & $\vv{y }+\vv{z }+u+t+w+v+\cc{q}$\\ \hline 
$\cc{y}$ & $.$ & $.$ & $.$ & $.$ & $.$ & $y$ & $y$\\ 
$\cc{z}$ & $.$ & $.$ & $.$ & $.$ & $z$ & $.$ & $z$\\ 
$y+z+\cc{u}$ & $.$ & $.$ & $.$ & $y$ & $y+z+u$ & $y$ & $y+z+u$\\ 
$y+z+\cc{t}$ & $.$ & $.$ & $z$ & $.$ & $z$ & $y+z+t$ & $y+z+t$\\ 
$y+\vv{z }+u+t+\cc{w}$ & $.$ & $z$ & $z$ & $y+z+t$ & $y+z+u+t+w$ & $y+z+t$ & $y+z+u+t+w$\\ 
$\vv{y }+z+u+t+\cc{v}$ & $y$ & $.$ & $y+z+u$ & $y$ & $y+z+u$ & $y+z+u+t+v$ & $y+z+u+t+v$\\ 
$\vv{y }+\vv{z }+u+t+w+v+\cc{q}$ & $y$ & $z$ & $y+z+u$ & $y+z+t$ & $y+z+u+t+w$ & $y+z+u+t+v$ & $y+z+u+t+w+v+q$\\ 
\end{tabular}}
We have \( y \lll v,q \) but \( y \not\ll v,q \). Also, \( z \lll w, q \) but \( z \not\ll w, q \).

Starting from the Cayley table of \( (Z(S), \ast) \), we derive the Cayley table of \( (S, \ast) \), which is displayed on the left in the following set of tables. Entries corresponding to elements \( s \) and \( t \) for which \( s^{\ast} \neq t^{+} \) and \( s \ast t \neq 0 \) are highlighted in purple.
\begin{center}
\begin{tabular}{ll} 
\begin{tabular}{ c | c c c c c c c } 
\textbf{\cc{$(S,\ast)$}} & $\cc{y}$ & $\cc{z}$  & $\cc{u}$ & $\cc{t}$ & $\cc{w}$ & $\cc{v}$& $\cc{q}$\\ \hline 
$\cc{y}$ & . & .   & . & . & . & $y$& .\\ 
$\cc{z}$ & . & .  & . & . & $z$  & .& .\\ 
$\cc{u}$ & . & .   & . & $y$  & $u$  & $\qq{-y}$& .\\ 
$\cc{t}$ & .     & . & $z$& . & $\qq{-z}$  & $t$& .\\ 
$\cc{w}$ & .     & $z$  & $\qq{-z}$& $t$  & $w$  & $\qq{-t}$& .\\ 
$\cc{v}$ & $y$      & . & $u$& $\qq{-y}$  & $\qq{-u}$  & $v$& .\\ 
$\cc{q}$ & . & .  & . & . & . & .& $q$ \\ 
\end{tabular} &
\begin{tabular}{ c | c c c c c c c } 
\textbf{\cc{$M$}} & $\cc{y}$ & $\cc{u}$ & $\cc{v}$ & $\cc{z}$ & $\cc{t}$ & $\cc{w}$ & $\cc{q}$\\ \hline 
$\cc{y}$ & . & . & $y$  & . & . & . & .\\ 
$\cc{t}$ & . & $z$  & $t$  & . & . & $\qq{-z}$  & .\\ 
$\cc{v}$ & $y$  & $u$  & $v$  & . & $\qq{-y}$  & $\qq{-u}$  & .\\ 
$\cc{z}$ & . & . & . & . & . & $z$  & .\\ 
$\cc{u}$ & . & . & $\qq{-y}$  & . & $y$  & $u$  & .\\ 
$\cc{w}$ & . & $\qq{-z}$  & $\qq{-t}$  & $z$  & $t$  & $w$  & .\\ 
$\cc{q}$ & . & . & . & . & . & . & $q$ \\ 
\end{tabular}
\end{tabular}
\end{center}
The table $M$ is on the right side that by rearranging and shifting the rows and columns of \( (S, \ast) \) so that the elements of the subset $\widetilde{L}_e$ being adjacent rows and the elements of the subset $\widetilde{R}_{e}$ being adjacent columns for every idempotent $e\in E(S)$. 
The table $M'$ is as follows:
\begin{center}
\begin{tabular}{ c | c c c c c c c } 
\textbf{\cc{$M'$}} & $\cc{y}$ & $\cc{u}$ & $\cc{v}$ & $\cc{z}$ & $\cc{t}$ & $\cc{w}$ & $\cc{q}$\\ \hline 
$\cc{y}$ & . & . & $y$  & . & . & . & .\\ 
$\cc{t}$ & . & $z$  & $t$  & . & . & .  & .\\ 
$\cc{v}$ & $y$  & $u$  & $v$  & . & .  & .  & .\\ 
$\cc{z}$ & . & . & . & . & . & $z$  & .\\ 
$\cc{u}$ & . & . & .  & . & $y$  & $u$  & .\\ 
$\cc{w}$ & . & .  & .  & $z$  & $t$  & $w$  & .\\ 
$\cc{q}$ & . & . & . & . & . & . & $q$ \\ 
\end{tabular}
\end{center}
Then, it is straightforward to compute that the determinant of the matrix \( M' \) is nonzero and equal to \( y^3z^3q \).  
Consequently, the determinant of \( \widetilde{\theta}_{S}(X_{S}) \) is also nonzero and given by  
\[
-y^3z^3(q + t + u - v - w - y - z),
\]  
where \( q \) is substituted by the expression  
\[
q = \sum\limits_{q' \lll q} \mu_S(q', q) q' = (q + t + u - v - w - y - z).
\]  
Moreover, the number of rearrangements and shifts applied to the rows and columns of \( C(X) \) during the construction of the matrix \( M \) is odd.  
Therefore, the sign of  
\(
\prod\limits_{e \in E(S)} \widetilde{\theta}_{e}(Y_e)
\)  
is negative.
\end{example}





\bibliographystyle{plain}
\bibliography{ref-Det}

\end{document}